\documentclass[12pt]{amsart}

\usepackage{amsmath} %
\usepackage{amssymb} %
\usepackage{amstext}
\usepackage{amsthm}
\usepackage[utf8]{inputenc}
\usepackage{a4wide}
\usepackage{xcolor}

\newcommand{\dimf}{\operatorname{dim}_f}
\newcommand{\attr}{\mathcal A}
\newcommand{\ltwo}{L^2(\Omega)}

\newcommand{\dual}[2]{\left\langle#1,#2\right\rangle}

\newcommand{\dert}{\partial_t}

\newcommand{\rr}{\mathbb{R}}

\newtheorem{lemma}{Lemma}[section]
\newtheorem{theorem}{Theorem}[section]
\newtheorem{definition}{Definition}[section]
\newtheorem{corollary}{Corollary}[section]

\numberwithin{equation}{section}
\begin{document}

\title[]{On finite-dimensional attractor to nonsmooth reaction-diffusion equations}
\thanks{D.P. is supported by the project No. 25-16592S financed by the Grant Agency of the Czech Republic. \\M.Z. was supported by the Swedish Research Council (Vetenskapsrådet) under Grant No. 2022-05046 awarded to Denis Gaidashev.}


\author[D.~Pra\v{z}\'{a}k]{Dalibor Pra\v{z}\'{a}k}
\address{Charles University, Faculty of Mathematics and Physics, Department of Mathematical Analysis, Sokolovsk\'{a}~83, 186~75~Prague~8, Czech~Republic}
\email{prazak@karlin.mff.cuni.cz}

\author[B. Priyasad]{Buddhika Priyasad}
\address{University of Konstanz, Department of Mathematics and Statistics, Universit\"{a}tsstra\ss e 10, 78464 Konstanz, Germany}
\email{priyasad@uni-konstanz.de}

\author[M. Zelina]{Michael Zelina}
\address{University of Uppsala, Department of Mathematics, Ångströmlaboratoriet, Regementsvägen 10, 751 06 Uppsala, Sweden}
\email{michael.zelina@math.uu.se}


\keywords{global attractor, fractal dimension, maximal monotone graphs, nonsmooth dynamical systems}
\subjclass[2000]{TODO}

\begin{abstract}
We consider a reaction diffusion equation, where the lower-order
nonlinearity allows for a non-smooth part, described by a maximal
monotone relation. We show that the global attractor has finite
fractal dimension, and also obtain explicit upper estimates in 
terms of the data.
\end{abstract}

\maketitle

\section{Introduction}

We consider reaction-diffusion equation
\begin{equation}	\label{eq1}
\dert u - \kappa \Delta u + \sigma_u = f(u)
\end{equation}
for an unknown function $u=u(t,x):[0,T]\times \Omega \to \rr$,
subject to zero Dirichlet boundary condition in a smooth bounded
$\Omega \subset \rr^d$. While the right-hand side nonlinearity
$f=f(u)$ is a regular function, we allow for $\sigma=\sigma_u$
to be multivalued; more specifically, $\sigma$ and $u$ are related 
by a maximal monotone graph. See Section~\ref{sec2} for precise
assumptions.


\par
It is standard to develop the basic theory of weak solutions
for \eqref{eq1}, based on e.g. on the well-known Yosida
approximation. Note that (forward) uniqueness of solutions
is immediate in view of the monotonicity of $\sigma_u$. Furthermore,
under suitable coercivity assumptions, the equation is dissipative
and has a unique compact global attractor $\attr$.

\par
On the other hand, up to our best knowledge, there are no results
(either positive or negative) concerning the finite-dimensionality
of the attractor that would apply to this situation -- see 
the discussion below on related results. Recall that
without the nonsmooth term $\sigma$, one has
\begin{equation}	\label{est1}
\dimf(\attr) \le c \left( 1 + |\Omega| 
		\left( \frac{L}{\kappa} \right)^{d/2}  \right)
\end{equation}
where $L$ is the Lipschitz constant of $f$, $|\Omega|$
and $d$ are the area and dimension of $\Omega$, and
$c$ is some non-dimensional constant. 
\par
For the sake of the following discussion, assume that 
$\kappa = \operatorname{diam}\Omega = 1$,
and $L>>1$ is the only relevant parameter -- see Section~\ref{sec4}
for corresponding non-dimensionalization of \eqref{eq1}. The estimate
\eqref{est1} then reduces to 
\begin{equation}	\label{est2}
	\dimf(\attr) \le c L^{d/2}
\end{equation}
Set $\sigma=0$, and consider the equation in variations $U=du$
\begin{equation}	\label{eq-var}
\dert U =  \Delta U + f'(u)U
\end{equation}
The $N$-trace of the linearized evolution operator can
be estimated by taking a supremum over all orthonormal
families $\{\phi_j\}_{j=1}^N \subset L^2(\Omega)$ 
\begin{align*}
	& \sup \sum_{j=1}^N \dual{\Delta \phi_j}{\phi_j}
		+ \dual{f'(u)\phi_j}{\phi_j}
\\
	&\le \sum_{j=1}^N  \big( - \lambda_j + L \big)
	\le - c N^{2/d+1} + L N
\end{align*}
Here we have used the min-max principle together with
the eigenvalue asymptotics of $-\Delta$, namely  
$\lambda_j \sim j^{2/d}$ (cf.\  \eqref{eq:Assymptotics} in the
Appendix). 
The last term becomes negative for $N> c L^{d/2}$, which leads
to the estimate \eqref{est2}.
\par
On the other hand, assuming that $f(u)=Lu$ in some neighbourhood
of the equilibrium point $u=0$, 
we see that the linearized operator has about $L^{d/2}$ unstable
modes. These arguments (see e.g. \cite{Temam1997} for a rigorous
treatment) show that indeed, if $\sigma=0$, the estimate 
\eqref{est2} is sharp. Yet, the above reasoning is not 
justified in the presence of
nonsmooth $\sigma$. Considering a regular approximation 
$\sigma_\varepsilon$ does not seem to help either, 
as the attractor might not be stable under such limits.
\par
We therefore recourse to the so-called method of trajectories
(see \cite{MP2002}, \cite{FeireislPrazak2010}),
with the resulting estimate of the order
\begin{equation}	\label{est3}
	\dimf(\attr) \le c L^d  \log^2{L}
\end{equation}
see Theorem~\ref{thm:Dimension} below. Note that this is strictly
larger than \eqref{est2}, and the increase seems to be solely
due to the term $\sigma$. More precisely, if $\sigma$ is not
present, our proof again yields \eqref{est2}.
What, then, is the actual value of the attractor dimension
for systems with nonsmooth terms?
Consider a simple case where $d=1$, $\Omega=(0,2\pi)$,
$\sigma = \operatorname{sign}$ and $f(u)=Lu$. It is easy
to see that the function
\begin{equation}	\label{equi1d}
	u(x) = 
\begin{cases}
	\frac{1-\cos(L^{1/2}x)}{L}, \quad &x\in(0,2\pi/L^{1/2})
\\
	0, \quad &\textrm{elsewhere}
\end{cases}
\end{equation}
is a stationary solution, and moreover, about $L^{1/2}$
such distinct equilibria can be placed in $(0,2\pi)$. 
This seems to suggest that in the presence of a nonsmooth term,
the attractor dimension still cannot be smaller then $c L^{d/2}$.
Nonetheless, the gap between \eqref{est2} and \eqref{est3} 
remains open to futher investigation.

\par \medskip
The paper is organized as follows: in Section~\ref{sec2}, we
formulate precise assumptions on the system \eqref{eq1}.
We also state Theorem~\ref{thm:exist_uni} on the existence of unique 
global weak solution. Finally, we formulate our main 
result, Theorem~\ref{thm:Dimension}, providing an explicit estimate 
of the attractor dimension to \eqref{eq1} in terms of the equation 
parameters.
\par
The rest of the paper is devoted to proving the above results.
In Section~\ref{sec3}, the well-posedness theorem is shown,
based on the Yosida approximation.
We also establish the crucial $L^1$-continuous
dependence of $\sigma$ in terms of the solution $u$
(Lemma~\ref{lem:L1-sigma-estimate}). We then obtain higher order
regularity of solutions and existence of a bounded absorbing set.
This ensures existence of a global attractor.
\par
Finally, the explicit estimate of the attractor dimension is
obtained in Section~\ref{sec4}. Turning first \eqref{eq1} into 
non-dimensional form, we use the space of trajectories to
prove the so-called smoothing property of the evolution
operators (Lemma~\ref{thm:SmoothingProperty}).
This in yields the estimate of attractor dimension.
\par
The whole procedure depends on optimal convering numbers
for the appropriate version of the celebrated Aubin-Lions lemma.
This problem, which can be of independent interest, is treated
in Appendix, building on the approach of \cite{FeireislPrazak2010}.


\par\medskip

Let us conclude this opening section with a review of
some recent related results. Surely enough, 
dynamical systems with maximal monotone graphs have
been considered before. To name a few examples, the so-called
Bingham fluids, and fluids with stick-slip boundary conditions
are studied in \cite{BM17}; see also the references therein.
Cahn-Hilliard equation with
(in authors' terminology) irregular terms have been
considered in \cite{GMS09}, \cite{GMS10}, where the existence of
global attractor was shown and moreover, convergence
of solutions to equilibria was established using the
Simon-Lojasiewicz method. We also mention the paper \cite{Lu13},
which considers the 2D Navier-Stokes equations subject
to so-called Tresca boundary condition. The latter is equivalent
to a  monotone stress-strain relation. Using the
method of trajectories, the author constructs a finite-
dimensional exponential attractor.

\par
More general reaction-diffusion systems combined with 
differential inclusions have been studied by \cite{BV99} 
and \cite{Va00}. The authors show that for non-monotone
relations, attractor can be infinite dimensional. They
also formulate abstract sufficient conditions
for its finite-dimensionality. It is not clear, however, if
their positive result applies to the general system 
\eqref{eq1} above.

\par
We also want to mention a paper \cite{EZ08}, where 
a finite-dimensional attractor to porous medium equation,
i.e. essentially problem with a non-smooth but monotone
nonlinearity, was shown using a similar technique based
on testing via the sign function, the Kato inequality 
and $L^1$-estimates.

\section{Problem formulation}	\label{sec2}

Concerning the equation \eqref{eq1}, we impose the 
following standing
assumptions:

\begin{itemize}

\item there exists $\Gamma \subset \rr^2$ a maximal monotone graph,
which corresponds to the multivalued function $\sigma(\cdot)$;
moreover, $\Gamma$ is coercive in the following sense:
\begin{equation}	\label{lower-G}
(u,\sigma) \in \Gamma \implies
\sigma \cdot u \ge c_1|\sigma|^{p} + c_1' |u|^{p'} - c_0
\end{equation}
for some $c_1$, $c_1'$, $c_0>0$ and $p>1$, $p'=p/(p-1)$;

\item $f:\rr \to \rr$ is a $C^1$ function such that 
\begin{equation}	\label{upper-F}
	|f(r)| \le c_2, \qquad f'(r) \le L
\end{equation}
for all $r\in \rr$ with some $c_2$, $L>0$

\end{itemize}


\begin{definition} By weak solution to \eqref{eq1} we understand a couple $(u,\sigma)$ such that
\begin{align}
	u &\in L^2(0,T;W^{1,2}_0(\Omega))	
		\cap L^{p'}((0,T)\times\Omega),
\\
	\sigma & \in L^p((0,T)\times \Omega) \\
	\dert u &\in L^2(0,T;W^{-1,2}(\Omega)) 
			+ L^{p'}((0,T)\times \Omega )
\end{align}
such that \eqref{eq1} holds in the sense of distributions,
and moreover
\begin{equation}
		(u,\sigma) \in \Gamma \quad
	\textrm{a.e.\ in\ } (0,T)\times \Omega
\end{equation}
\end{definition}

\begin{theorem}\label{thm:exist_uni} 
Let $u_0\in L^2(\Omega)$ and $T>0$. Then problem \eqref{eq1} 
admits a unique weak solution $(u,\sigma)$ on $[0,T]$ 
in the sense of Definition~2.1. Moreover,
\[
u\in C([0,T];L^2(\Omega)),
\qquad
u(0)=u_0,
\]
and the solution depends continuously on the initial data $u_0 \in \ltwo$.
\end{theorem}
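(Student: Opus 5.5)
We construct the solution via the Yosida approximation of the maximal monotone graph $\Gamma$ and pass to the limit with monotonicity. Uniqueness and continuous dependence come from a direct energy estimate for differences.

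\emph{Approximation.} Let $\beta$ be the maximal monotone operator whose graph is $\Gamma$, and let $\beta_\varepsilon$ be its Yosida approximation. It is monotone and Lipschitz with constant $1/\varepsilon$. Since $f$ is $C^1$ with $f' \le L$ and $f$ is bounded, the problem
\[
\dert u_\varepsilon - \kappa\Delta u_\varepsilon + \beta_\varepsilon(u_\varepsilon) = f(u_\varepsilon)
\]
is a semilinear equation with a monotone Lipschitz term plus a bounded $C^1$ term. It therefore has a unique solution $u_\varepsilon \in L^2(0,T;W^{1,2}_0)\cap C([0,T];\ltwo)$, obtained by Galerkin or by standard monotone operator theory.

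\emph{Uniform bounds.} Test with $u_\varepsilon$. To transfer the coercivity \eqref{lower-G} to $\beta_\varepsilon$, write $u = J_\varepsilon u + \varepsilon\beta_\varepsilon(u)$ with $(J_\varepsilon u, \beta_\varepsilon(u)) \in \Gamma$. Then
\[
\beta_\varepsilon(u)\,u \ge \beta_\varepsilon(u)\,J_\varepsilon u \ge c_1|\beta_\varepsilon(u)|^p + c_1'|J_\varepsilon u|^{p'} - c_0 .
\]
Together with $|f|\le c_2$ this gives $u_\varepsilon$ bounded in $L^\infty(0,T;\ltwo)\cap L^2(0,T;W^{1,2}_0)$, $\sigma_\varepsilon := \beta_\varepsilon(u_\varepsilon)$ bounded in $L^p$, and $J_\varepsilon u_\varepsilon$ bounded in $L^{p'}$. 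Since $u_\varepsilon - J_\varepsilon u_\varepsilon = \varepsilon\sigma_\varepsilon \to 0$ in $L^p$, the limit $u$ lies in $L^{p'}$. From the equation, $\dert u_\varepsilon$ is bounded in $L^2(W^{-1,2}) + L^p$. (The $L^p$ versus $L^{p'}$ indexing in the definition should match this duality, and I will keep track of it.)

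Aubin--Lions, in the version admitting sums of dual spaces, gives $u_\varepsilon \to u$ strongly in $L^2((0,T)\times\Omega)$, so also $J_\varepsilon u_\varepsilon \to u$ and $f(u_\varepsilon)\to f(u)$. Moreover $\sigma_\varepsilon \rightharpoonup \sigma$ weakly in $L^p$.

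\emph{Identifying the graph.} This is the main obstacle. The plan is Minty's trick on the realization of $\Gamma$ in $L^{p'}\times L^p$, which is maximal monotone. The product $\int \sigma_\varepsilon J_\varepsilon u_\varepsilon$ converges to $\int \sigma u$ because $J_\varepsilon u_\varepsilon \to u$ strongly in $L^2$ while $\sigma_\varepsilon$ is only weakly convergent in $L^p$. The mismatch of exponents needs care. One remedy is a truncation/equi-integrability argument using the bound $\int|\sigma_\varepsilon|^p$ together with a.e. convergence. The alternative is to avoid the product entirely: obtain $\limsup\int\sigma_\varepsilon u_\varepsilon \le \int \sigma u$ from the energy identity, comparing the limit energy equality with the lower semicontinuity of $\|u_\varepsilon(T)\|^2$ and of the Dirichlet term. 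That route requires the energy equality for the limit, which holds by the duality pairing $\dert u \in L^2(W^{-1,2})+L^p$ with $u \in L^2(W^{1,2}_0)\cap L^{p'}$. Either route yields $(u,\sigma)\in\Gamma$ a.e. The same duality pairing gives $u\in C([0,T];\ltwo)$, and $u(0)=u_0$ follows from the weak limit.

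\emph{Uniqueness and continuous dependence.} For two solutions, subtract the equations and test with $w = u_1 - u_2$; this is justified by the same duality. Monotonicity gives $(\sigma_1-\sigma_2)w \ge 0$, and $(f(u_1)-f(u_2))w \le L w^2$ by $f'\le L$. Hence
\[
\tfrac12\frac{d}{dt}\|w\|^2 + \kappa\|\nabla w\|^2 \le L\|w\|^2 ,
\]
and Gronwall's inequality gives $\|w(t)\|\le e^{Lt}\|w(0)\|$. This yields both uniqueness of $u$ and Lipschitz continuous dependence in $C([0,T];\ltwo)$. Uniqueness of $\sigma$ then follows from the equation, since $\sigma = f(u) - \dert u + \kappa\Delta u$ in distributions.
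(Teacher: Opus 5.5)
Your proposal is correct and follows essentially the same route as the paper. Both arguments use the Yosida approximation with the coercivity transferred via $r = J_\varepsilon r + \varepsilon\sigma_\varepsilon(r)$, and Aubin--Lions--Simon compactness. Your second option for identifying $\sigma$ is exactly the paper's: the energy identity gives $\limsup\int\sigma_\varepsilon J_\varepsilon u_\varepsilon \le \limsup\int\sigma_\varepsilon u_\varepsilon \le \int\sigma u$, followed by Minty's criterion for the Nemytskii realization of $\Gamma$. Uniqueness and continuous dependence come, as in the paper, from testing the difference with $w$ and applying Gronwall.

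Your remark on the exponents is also right. Since $\sigma\in L^p$, the time derivative naturally lies in $L^2(0,T;W^{-1,2}(\Omega))+L^p$, which is what the paper's own proof obtains, so the $L^{p'}$ in Definition~2.1 is a slip.
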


Before we formulate our main result, we summarize here the standard notions of dynamical systems. For more details we refer the reader to the classical monographs \cite{Temam1997} or \cite{Robinson2001}. We also point out the survey \cite{Zelik2023} and the extensive literature therein.

Let $X$ be a normed space. Family of mappings $\mathcal{S}_t : X \to X$ is referred to as semigroup, provided that $\mathcal{S}_0$ is an identity mapping and $\mathcal{S}_{s+t} = \mathcal{S}_s \mathcal{S}_t$ for all $s$, $t \geq 0$. The couple $(\mathcal{S}_t, X)$ is called a dynamical system if the mapping $(t, x) \mapsto \mathcal{S}_t x$ is continuous. Note that in virtue of Theorem~\ref{thm:exist_uni}, the system \eqref{eq1} generates a dynamical system on the space
$X=L^2(\Omega)$, given by its solution operators $(t,u_0) \mapsto
u(t)$.
\par
Next, a set $\mathcal{A} \subset X$ is referred to as a global attractor to the dynamical system $(\mathcal{S}_t, X)$ provided that the following conditions are met:
\begin{enumerate}
\item[(i)] $\mathcal{A}$ is compact in $X$,
\item[(ii)]  $\mathcal{S}_t \mathcal{A} = \mathcal{A}$ for all $t \geq  0$, and
\item[(iii)]  for any bounded $\mathcal{B} \subset X$ there holds 
\begin{equation*}
\operatorname{dist} (\mathcal{S}_t \mathcal{B}, \mathcal{A}) \rightarrow 0 \text{ as } t \rightarrow \infty, 
\end{equation*}
where $\operatorname{dist} (\mathcal{B}, \mathcal{A})$ is the standard Hausdorff semi-distance of the set $\mathcal{B}$ from the set $\mathcal{A}$, defined as $\operatorname{dist} (\mathcal{B}, \mathcal{A}) = \sup\limits_{a \in \mathcal{A}} \inf\limits_{b\in \mathcal{B}} || b - a||_X$.
\end{enumerate}
Finally, to capture some kind of quantitative information about $\mathcal{A}$, we introduce its fractal dimension. That is
\begin{equation*}
\dim_f^X (\mathcal{A}) = \limsup\limits_{\varepsilon \to 0_+} \frac{\log N_X (\mathcal{A}, \varepsilon)}{- \log \varepsilon} \, ,
\end{equation*}
where $N_X (\mathcal{A}, \varepsilon)$ denotes the minimal number of $\varepsilon$-balls in $X$ needed to cover $\mathcal{A}$.

\begin{theorem}\label{thm:Dimension} The dynamical system generated by \eqref{eq1} has a global attractor $\attr \subset \ltwo$. Moreover, its fractal dimension can be estimated as
\begin{equation} \label{eq:DimensionEstimate}
\dim_f^{L^2(\Omega)}(\attr) 
\leq c 
|\Omega| \bigg( \frac{L}{\kappa} \bigg)^{d}  
\bigg[1 + \log^2  \bigg( |\Omega|^{\frac{1}{d}}\frac{L}{\kappa}  \bigg) \bigg] 
	\,,
\end{equation}
where $c > 0$ is a scale-invariant constant, depending
only on $d$, and the shape of $\Omega$.
\end{theorem}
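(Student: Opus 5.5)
The proof splits into two parts: existence of the attractor via a bounded absorbing set and compactness, and the dimension bound via the method of trajectories (\cite{MP2002}, \cite{FeireislPrazak2010}).

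\textbf{Existence of the attractor.} Testing \eqref{eq1} by $u$ and using \eqref{lower-G} together with $|f|\le c_2$ gives
\[
\frac12\frac{d}{dt}\|u\|^2_{\ltwo}+\kappa\|\nabla u\|^2+c_1\|\sigma\|_p^p+c_1'\|u\|_{p'}^{p'}\le c(1+\|u\|_1).
\]
With Poincar\'e, this yields an $L^2$ absorbing ball. Testing by $-\Delta u$, or by $t(-\Delta u)$ for smoothing, and using monotonicity of $\Gamma$ in the form $\dual{\sigma_u}{-\Delta u}\ge 0$, first for the Yosida approximation and then in the limit, gives a $W^{1,2}_0$ absorbing set. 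Hence the semigroup from Theorem~\ref{thm:exist_uni} is compact and dissipative, and $\attr$ exists by the standard theory.

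\textbf{Dimension bound.} First rescale \eqref{eq1} so that $\kappa=1$ and $|\Omega|=1$. Then \eqref{eq:DimensionEstimate} reduces to $\dimf(\attr)\le c L^d(1+\log^2 L)$, since the rescaled parameter is $\tilde L=|\Omega|^{2/d}L/\kappa$, and the rescaled bound $c\tilde L^{d}(1+\log^2\tilde L)$ is then converted back by tracking how $|\Omega|$ and $\kappa$ enter. For two trajectories $u,v$ with $w=u-v$, the key difference estimate is obtained by testing by $w$. Monotonicity gives $\dual{\sigma_u-\sigma_v}{w}\ge0$, and $f'\le L$ gives
\[
\frac12\frac{d}{dt}\|w\|^2+\|\nabla w\|^2\le L\|w\|^2 .
\]
Hence $\|w(t)\|\le e^{Lt}\|w(0)\|$ and $\int_0^\tau\|\nabla w\|^2\le e^{2L\tau}\|w(0)\|^2$.

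For the time derivative, the equation gives $\dert w=\Delta w-(\sigma_u-\sigma_v)+f(u)-f(v)$. The $\sigma$-difference is controlled only in $L^1$, and this is where Lemma~\ref{lem:L1-sigma-estimate} enters. Testing formally by $\operatorname{sign}(w)$ (Kato's inequality) gives $\|\sigma_u-\sigma_v\|_{L^1((0,\tau)\times\Omega)}\le c(1+L\tau)e^{L\tau}\|w(0)\|_{L^1}$. Thus $\dert w$ is bounded in $L^1(0,\tau;Y)$ for some negative Sobolev space $Y\supset L^1(\Omega)$, e.g.\ $W^{-s,2}$ with $s>d/2$.

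\textbf{Smoothing property and conclusion.} On the trajectory space $X_\tau=\{\chi\in L^2(0,\tau;W^{1,2}_0),\ \dert\chi\in L^1(0,\tau;W^{-s,2})\}$, the map $u_0\mapsto u|_{[0,\tau]}$ is Lipschitz from $\ltwo$ with constant $\sim e^{c L\tau}$. Composing with $\chi\mapsto\chi(\tau)$ (via averaging over $[\tau/2,\tau]$ and the energy estimate) we obtain the smoothing property of Lemma~\ref{thm:SmoothingProperty}:
\[
\|L_\tau\chi_1-L_\tau\chi_2\|_{L^2(0,\tau;\ltwo)}\le \tfrac14\|\chi_1-\chi_2\|_{L^2(0,\tau;\ltwo)}+K\,\|\chi_1-\chi_2\|_{\text{weaker norm}},
\]
where $L_\tau$ shifts a trajectory by $\tau$. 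The standard covering lemma then gives $\dimf\le \log N(1/(8K))/\log 2$, where $N(\varepsilon)$ is the covering number of the unit ball of $X_\tau$ in $L^2(0,\tau;\ltwo)$, i.e.\ the quantitative Aubin--Lions problem of the Appendix. I expect roughly $\log N(\varepsilon)\sim \varepsilon^{-d}\log(1/\varepsilon)$, in place of $\varepsilon^{-d/2}$, because the time derivative lies only in $L^1(W^{-s,2})$. Choosing $\tau\sim 1/L$ makes $K\sim L$, which would give $L^d\log^2L$, one logarithm from the covering bound and one from the exponential factors.

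\textbf{Main obstacle.} The hardest step is this covering estimate with the optimal powers, where the time derivative carries only $L^1$ control. I would first try to follow \cite{FeireislPrazak2010}: project onto the first $n$ Dirichlet eigenfunctions, using $\lambda_n\sim n^{2/d}$ from \eqref{eq:Assymptotics}. The high modes are then small by $\lambda_n^{-1/2}$ from the $W^{1,2}_0$ bound. The low modes form a finite-dimensional $BV$-in-time family, covered by piecewise-constant approximations at cost $\sim n\log(1/\varepsilon)$ times the number of time steps. Balancing $n$ against $\varepsilon$ gives the final exponents. A secondary difficulty is justifying the $\operatorname{sign}$ test rigorously, through the Yosida approximation and a smooth approximation of $\operatorname{sign}$.
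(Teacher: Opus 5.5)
\textbf{Gap: the covering step that produces $L^d\log^2 L$ is missing, and your heuristics for it are wrong.} Your architecture matches the paper's: rescaling to $\kappa=|\Omega|=1$, $\ell$-trajectories with $\ell\sim 1/L$, the sign-test $L^1$ bound on $\sigma_u-\sigma_v$ (Lemma~\ref{lem:L1-sigma-estimate}), a smoothing estimate as in Lemma~\ref{thm:SmoothingProperty}, and a quantitative Aubin--Lions covering. The covering step is where the exponent is actually produced, and you do not carry it out. The heuristics you give in its place do not hold:
\begin{itemize}
\item With $\ell=1/L$, the Lipschitz constants in \eqref{eq:SmoothingFirst}--\eqref{eq:SmoothingSecond} are of order $\sqrt L$, not $L$.
\item The exponential factors are $e^{O(1)}$ and contribute no logarithm.
\item The covering number of the unit ball of $X_\tau$ depends on $\tau$ as well as on $\varepsilon$, so the guess $\log N(\varepsilon)\sim\varepsilon^{-d}\log(1/\varepsilon)$ is not the right bookkeeping.
\end{itemize}
In the paper the power comes from the interplay of $\ell$ with the $L^1$-in-time, $L^1$-in-space control of $\sigma_u-\sigma_v$. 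In Lemma~\ref{thm:Covering}, with $A,B\sim\sqrt L$, $\alpha=1/2-\varepsilon$ and $\beta=d/2+\varepsilon$, the leading term is $\ell A^{\beta/\alpha}B^{1/\alpha}A^d\approx L^{-1}\cdot L^{d/2}\cdot L\cdot L^{d/2}$. Both logarithms come from the $c/\sqrt{\varepsilon}$ blow-up of the borderline embeddings $H^{d/2+\varepsilon}\hookrightarrow L^\infty$ and $I^{1/2+\varepsilon}\hookrightarrow L^\infty$. This is the $c/\varepsilon$ in \eqref{eq:NormDerivative}, raised to the power $1/\alpha\approx 2$ and optimized at $\varepsilon\sim 1/\log L$.

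\textbf{Why the choice of space for $\dert w$ matters.} Putting all of $\dert w$ into one space $L^1(0,\tau;W^{-s,2})$ loses information. Since $\Delta w$ lies only in $W^{-1,2}$, in $d=1$ you are forced to take $s\ge 1>d/2$. (For $d\ge2$ you can take $s=d/2+\delta$, and the loss is only logarithmic.) With equally spaced time steps, which is the cost model you describe, the $d=1$ count goes as follows, in units of $\|w\|_{L^2(0,\ell;L^2)}$:
\begin{itemize}
\item you need $n\sim L^{1/2}$ modes;
\item the variation of $P_nw$ in $L^2$ is $V\sim\lambda_n^{1/2}B\sim L$;
\item hence $m\sim \ell V^2\sim L$ steps, and $\log N\sim nm\sim L^{3/2}$ up to logarithms.
\end{itemize}
This is not $L\log^2L$, and Lemma~\ref{thm:Covering} with $\beta=1$ gives the same $L^{3/2}$. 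The paper avoids this by keeping $\dert w\in L^1(L^2)+L^2(W^{-1,2})+L^1(L^1)$ and covering the three ellipsoids $\mathcal M_{\alpha,0}$, $\mathcal M_{1,1}$, $\mathcal M_{\alpha,\beta}$ separately (Lemma~\ref{thm:LemmaNormDerivative}). The diffusion part, being $L^2$ in time, then costs only $\ell AB\,A^d\sim L^{d/2}$.

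\textbf{How your route can be repaired.} Your projection-plus-piecewise-constant scheme becomes a genuine alternative to the ellipsoid covering if you separate the pieces in the same way. With $P_n$ your spectral projection, the local Weyl law gives $\|P_n(\sigma_u-\sigma_v)\|_{L^2}\le c\lambda_n^{d/4}\|\sigma_u-\sigma_v\|_{L^1}$, and directly $\|P_n\Delta w\|_{L^2}\le \lambda_n^{1/2}\|\nabla w\|_{L^2}$. These give $n\sim L^{d/2}$, $V\sim L^{d/4+1/2}$ and $m\sim L^{d/2}$, hence $\log N\lesssim L^d$ up to a logarithm in every dimension.

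\textbf{Smaller points.}
\begin{itemize}
\item The inequality you display is of squeezing type. The covering argument you invoke needs instead $\|L_\tau\chi_1-L_\tau\chi_2\|_{X_\tau}\le K\|\chi_1-\chi_2\|_{L^2(0,\tau;L^2)}$.
\item Testing by $-\Delta u$ leaves a boundary term $-\sigma_\varepsilon(0)\int_{\partial\Omega}\partial_\nu u$ unless $0\in\Gamma(0)$, which is not assumed. Subtract $\sigma_\varepsilon(0)$ first.
\item Undoing the rescaling $\tilde L=|\Omega|^{2/d}L/\kappa$ gives $|\Omega|^2(L/\kappa)^d[1+\log^2(|\Omega|^{2/d}L/\kappa)]$, the scale-invariant form consistent with \eqref{est1}. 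The displayed \eqref{eq:DimensionEstimate} appears to contain a misprint, so ``tracking'' the scaling will not literally reproduce it.
\end{itemize}
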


\section{Wellposedness and global attractor}	\label{sec3}

In this section we first prove Theorem~\ref{thm:exist_uni}.

\begin{proof}[Proof of Theorem~\ref{thm:exist_uni}]
For $\varepsilon>0$, let
\[
J_\varepsilon=(I + \varepsilon\Gamma)^{-1}
\]
be the resolvent of $\Gamma$, and define the Yosida approximation
\[
\sigma_\varepsilon(r)
=
\frac{1}{\varepsilon}\bigl(r-J_\varepsilon r\bigr),
\qquad r\in\mathbb R .
\]
It is well known that $\sigma_\varepsilon$ is single-valued, monotone and globally Lipschitz;
see, for instance, \cite[Chapter~2]{Barbu2010} or \cite[Section~2.4]{Brezis2010}. Moreover,
\[
\bigl(J_\varepsilon r,\sigma_\varepsilon(r)\bigr)\in\Gamma,
\qquad
r=J_\varepsilon r + \varepsilon\sigma_\varepsilon(r).
\]
Hence, by \eqref{lower-G},
\begin{equation}
\label{eq:coercivity-yosida}
\sigma_\varepsilon(r)r
=
\sigma_\varepsilon(r)J_\varepsilon r
+
\varepsilon|\sigma_\varepsilon(r)|^2
\geq
c_1|\sigma_\varepsilon(r)|^p
+
c_1'|J_\varepsilon r|^{p'}
-c_0 .
\end{equation}

We consider the approximate problem
\begin{equation}
\label{eq:approximate-problem}
\begin{cases}
\partial_tu_\varepsilon
-\kappa\Delta u_\varepsilon
+\sigma_\varepsilon(u_\varepsilon)
=
f(u_\varepsilon),
&\text{in }(0,T)\times\Omega,
\\
u_\varepsilon=0,
&\text{on }(0,T)\times\partial\Omega,
\\
u_\varepsilon(0)=u_0,
&\text{in }\Omega .
\end{cases}
\end{equation}
Since $\sigma_\varepsilon$ is globally Lipschitz and monotone, while $f$ is smooth and
bounded, the standard Galerkin method yields a unique weak solution
\[
u_\varepsilon
\in
L^2(0,T;W^{1,2}_0(\Omega))
\cap
C([0,T];L^2(\Omega));
\]
see \cite[Chapter~3]{Temam1997}.

Testing \eqref{eq:approximate-problem} by $u_\varepsilon$ gives
\[
\frac12\frac{d}{dt}\|u_\varepsilon\|_{L^2(\Omega)}^2
+
\kappa\|\nabla u_\varepsilon\|_{L^2(\Omega)}^2
+
\int_\Omega
\sigma_\varepsilon(u_\varepsilon)u_\varepsilon\,dx
=
\int_\Omega
f(u_\varepsilon)u_\varepsilon\,dx .
\]
Using \eqref{eq:coercivity-yosida} and the bound $|f(r)|\leq c_2$, we infer
\begin{align}
\frac12\frac{d}{dt}\|u_\varepsilon\|_{L^2(\Omega)}^2
&+
\kappa\|\nabla u_\varepsilon\|_{L^2(\Omega)}^2
+
c_1\|\sigma_\varepsilon(u_\varepsilon)\|_{L^p(\Omega)}^p
+
c_1'\|J_\varepsilon u_\varepsilon\|_{L^{p'}(\Omega)}^{p'}
\nonumber
\\
&\leq
c_0|\Omega|
+
c_2|\Omega|^{1/2}\|u_\varepsilon\|_{L^2(\Omega)} .
\label{eq:energy-estimate}
\end{align}
Applying Young's inequality and Gronwall's lemma, we obtain
\begin{equation}
\label{eq:uniform-estimates}
\begin{aligned}
&
\|u_\varepsilon\|_{L^\infty(0,T;L^2(\Omega))}
+
\|u_\varepsilon\|_{L^2(0,T;W^{1,2}_0(\Omega))}
\\
&\qquad
+
\|\sigma_\varepsilon(u_\varepsilon)\|_{L^p((0,T)\times\Omega)}
+
\|J_\varepsilon u_\varepsilon\|_{L^{p'}((0,T)\times\Omega)}
\leq C_T ,
\end{aligned}
\end{equation}
where the constant $C_T$ is independent of $\varepsilon$.

From \eqref{eq:approximate-problem} we also have
\[
\partial_tu_\varepsilon
=
\kappa\Delta u_\varepsilon
-
\sigma_\varepsilon(u_\varepsilon)
+
f(u_\varepsilon).
\]
Consequently,
\begin{equation}
\label{eq:time-derivative-bound}
\{
\partial_tu_\varepsilon
\}_{\varepsilon>0}
\quad\text{is bounded in}\quad
L^2(0,T;W^{-1,2}(\Omega))
+
L^p((0,T)\times\Omega).
\end{equation}

By \eqref{eq:uniform-estimates}, \eqref{eq:time-derivative-bound}, and the
Aubin--Lions--Simon compactness theorem
\cite[Corollary~4]{Simon1987}, there exist a subsequence, not relabeled, and functions
\[
u
\in
L^2(0,T;W^{1,2}_0(\Omega))
\cap
L^{p'}((0,T)\times\Omega),
\]
\[
\sigma\in L^p((0,T)\times\Omega),
\]
such that
\begin{align}
u_\varepsilon
&\rightharpoonup u
&&\text{weakly in }L^2(0,T;W^{1,2}_0(\Omega)),
\label{eq:weak-conv-u}
\\
u_\varepsilon
&\overset{\ast}{\rightharpoonup}u
&&\text{weakly-* in }L^\infty(0,T;L^2(\Omega)),
\label{eq:weak-star-conv-u}
\\
u_\varepsilon
&\to u
&&\text{strongly in }L^2((0,T)\times\Omega),
\label{eq:strong-conv-u}
\\
\sigma_\varepsilon(u_\varepsilon)
&\rightharpoonup\sigma
&&\text{weakly in }L^p((0,T)\times\Omega),
\label{eq:weak-conv-eta}
\\
J_\varepsilon u_\varepsilon
&\rightharpoonup u
&&\text{weakly in }L^{p'}((0,T)\times\Omega).
\label{eq:weak-conv-resolvent}
\end{align}
Here, (3.10) follows from
\[
u_\varepsilon-J_\varepsilon u_\varepsilon
=
\varepsilon\sigma_\varepsilon(u_\varepsilon)
\longrightarrow 0
\qquad\text{strongly in }
L^p((0,T)\times\Omega).
\]

Moreover, after passing to a further subsequence if necessary, for every
$\varphi\in L^2(\Omega)$,
\[
\sup_{t\in[0,T]}
\left|
\bigl(u_\varepsilon(t)-u(t),\varphi\bigr)_{L^2(\Omega)}
\right|
\longrightarrow 0.
\]
In particular,
\[
u_\varepsilon(t)\rightharpoonup u(t)
\qquad\text{weakly in }L^2(\Omega)
\]
for every $t\in[0,T]$.
Next, since $f$ is continuous and bounded, the strong convergence
\eqref{eq:strong-conv-u} implies
\[
f(u_\varepsilon)\to f(u)
\qquad
\text{strongly in }
L^q((0,T)\times\Omega)
\]
for every finite $q\geq1$. Therefore, passing to the limit in the weak formulation of
\eqref{eq:approximate-problem}, we obtain
\begin{equation}
\label{eq:limit-equation}
\partial_tu
-\kappa\Delta u
+\sigma
=
f(u)
\qquad
\text{in }\mathcal D'((0,T)\times\Omega).
\end{equation}

The standard compactness argument also gives
\[
u\in C_w([0,T];L^2(\Omega)),
\qquad
u(0)=u_0.
\]
It remains to identify $\sigma$ with an element of $\Gamma(u)$.
For $0<\tau\leq T$, set $Q_\tau=(0,\tau)\times\Omega$. For almost
every $\tau\in(0,T)$, the approximate energy identity reads
\[
\frac12\|u_\varepsilon(\tau)\|_{L^2(\Omega)}^2
+\kappa\int_0^\tau
\|\nabla u_\varepsilon(t)\|_{L^2(\Omega)}^2\,dt
+\int_{Q_\tau}
\sigma_\varepsilon(u_\varepsilon)u_\varepsilon\,dx\,dt
=
\frac12\|u_0\|_{L^2(\Omega)}^2
+\int_{Q_\tau}
f(u_\varepsilon)u_\varepsilon\,dx\,dt.
\]
Since
\[
J_\varepsilon u_\varepsilon
=
u_\varepsilon
-\varepsilon\sigma_\varepsilon(u_\varepsilon),
\]
we have
\[
\int_{Q_\tau}
\sigma_\varepsilon(u_\varepsilon)
J_\varepsilon u_\varepsilon\,dx\,dt
\leq
\int_{Q_\tau}
\sigma_\varepsilon(u_\varepsilon)
u_\varepsilon\,dx\,dt.
\]
The strong convergence of $u_\varepsilon$ and the boundedness and
continuity of $f$ imply
\[
f(u_\varepsilon)u_\varepsilon
\longrightarrow
f(u)u
\qquad\text{strongly in }L^1(Q_\tau).
\]
Therefore, using weak lower semicontinuity in the approximate energy
identity, we obtain
\begin{multline*}
\limsup_{\varepsilon\to0}
\int_{Q_\tau}
\sigma_\varepsilon(u_\varepsilon)
J_\varepsilon u_\varepsilon\,dx\,dt
\leq \\
\frac12\|u_0\|_{L^2(\Omega)}^2
+\int_{Q_\tau}f(u)u\,dx\,dt
-\frac12\|u(\tau)\|_{L^2(\Omega)}^2
-\kappa\int_0^\tau
\|\nabla u(t)\|_{L^2(\Omega)}^2\,dt.    
\end{multline*}
On the other hand, testing the limit equation (3.11) by $u$ and
using the standard chain rule gives
\[
\frac12\|u(\tau)\|_{L^2(\Omega)}^2
+\kappa\int_0^\tau
\|\nabla u(t)\|_{L^2(\Omega)}^2\,dt
+\int_{Q_\tau}\sigma u\,dx\,dt
=
\frac12\|u_0\|_{L^2(\Omega)}^2
+\int_{Q_\tau}f(u)u\,dx\,dt.
\]
Consequently,
\[
\limsup_{\varepsilon\to0}
\int_{Q_\tau}
\sigma_\varepsilon(u_\varepsilon)
J_\varepsilon u_\varepsilon\,dx\,dt
\leq
\int_{Q_\tau}\sigma u\,dx\,dt.
\]
Since
\[
J_\varepsilon u_\varepsilon\rightharpoonup u
\quad\text{in }L^{p'}(Q_\tau),
\qquad
\sigma_\varepsilon(u_\varepsilon)\rightharpoonup\sigma
\quad\text{in }L^p(Q_\tau),
\]
Minty's criterion for the maximal monotone Nemytskii operator induced
by $\Gamma$ yields
\[
(u,\sigma)\in\Gamma
\qquad\text{a.e. in }Q_\tau.
\]
Since this holds for almost every $\tau\in(0,T)$, it follows that
\[
(u,\sigma)\in\Gamma
\qquad\text{a.e. in }(0,T)\times\Omega.
\]
The same chain-rule argument, together with the weak continuity of
$u$, gives
\[
u\in C([0,T];L^2(\Omega)).
\]
Hence $(u,\sigma)$ is a weak solution in the sense of
Definition~2.1.\\

We now prove uniqueness. Let $(u,\sigma_u)$ and $(v,\sigma_v)$ be two weak solutions
corresponding to initial data $u_0$ and $v_0$, respectively. Set
\[
w=u-v .
\]
Subtracting the two equations gives
\[
\partial_tw
-\kappa\Delta w
+
(\sigma_u-\sigma_v)
=
f(u)-f(v).
\]
Testing by $w$ and using the monotonicity of $\Gamma$, we obtain
\[
\frac12\frac{d}{dt}\|w\|_{L^2(\Omega)}^2
+
\kappa\|\nabla w\|_{L^2(\Omega)}^2
\leq
\int_\Omega
(f(u)-f(v))w\,dx .
\]
Since $f'(r)\leq L$, we have
\[
(f(r)-f(s))(r-s)\leq L|r-s|^2,
\qquad
r,s\in\mathbb R .
\]
Therefore,
\[
\frac12\frac{d}{dt}\|w\|_{L^2(\Omega)}^2
\leq
L\|w\|_{L^2(\Omega)}^2 .
\]
An application of Gronwall's lemma yields
\begin{equation}	\label{est:differ}
\|u(t)-v(t)\|_{L^2(\Omega)}^2
\leq
e^{2Lt}
\|u_0-v_0\|_{L^2(\Omega)}^2,
\qquad
t\in[0,T].
\end{equation}
Substituting $u=v$ into the difference equation gives
$\sigma_u=\sigma_v$ in the sense of distributions, and hence almost
everywhere. Thus, the pair $(u,\sigma)$ is unique.
\end{proof}

We have seen that thanks to the equation, $\sigma_u$ is uniquely 
determined by $u$. We now show that more explicitely, its evolution
can be controlled in $L^1$-norm; this will become a key component
in estimating the attractor dimension later on.

\begin{lemma}
\label{lem:L1-sigma-estimate}
Let $(u,\sigma_u)$ and $(v,\sigma_v)$ be two weak solutions of \eqref{eq1} on
$[0,T]$. Then, for every $\ell>0$ such that $2\ell \le T$, 
\begin{equation}
\label{eq:L1-sigma-integrated}
\int_\ell^{2\ell}
\|\sigma_u(t)-\sigma_v(t)\|_{L^1(\Omega)}
\,dt
\leq
\left(
\frac1\ell+2Le^{2L\ell}
\right)
|\Omega|^{1/2}
\int_0^\ell
\|u(t) - v(t)\|_{L^2(\Omega)}
\,dt .
\end{equation}
\end{lemma}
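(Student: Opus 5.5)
We plan to test the difference equation with the sign of the difference, an $L^1$ argument in the spirit of Kato's inequality. Set $w=u-v$. The equation for $w$ reads
\[
\dert w-\kappa\Delta w+(\sigma_u-\sigma_v)=f(u)-f(v).
\]
Monotonicity of $\Gamma$ gives the key pointwise fact: wherever $w>0$ we have $\sigma_u\ge\sigma_v$, and wherever $w<0$ we have $\sigma_u\le\sigma_v$. Where $w=0$ the graph may be multivalued, so $\sigma_u-\sigma_v$ can be nonzero there. We will handle this by testing with a smooth approximation $S_\delta(w)$ of $\operatorname{sign}(w)$ (Lipschitz, odd, nondecreasing) and by using the equation itself on the level set $\{w=0\}$. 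There $\nabla w=0$ and $\dert w=0$ a.e., so formally $\sigma_u-\sigma_v=f(u)-f(v)=0$ a.e. on that set. Hence $(\sigma_u-\sigma_v)\operatorname{sign}(w)=|\sigma_u-\sigma_v|$ a.e. In the rigorous version we will either invoke Stampacchia's lemma, or pass $\delta\to0$ and use that $\int(\sigma_u-\sigma_v)S_\delta(w)\to\int|\sigma_u-\sigma_v|$ up to the contribution of $\{w=0\}$, which the equation forces to vanish.

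Testing with $S_\delta(w)$ and letting $\delta\to0$ should give, for a.e.\ $t$,
\[
\frac{d}{dt}\|w\|_{L^1}+\|\sigma_u-\sigma_v\|_{L^1}\le \int_\Omega (f(u)-f(v))\operatorname{sign}(w)\,dx \le L\|w\|_{L^1}.
\]
Two ingredients enter here. First, the diffusion term drops out because $\kappa\int\nabla w\cdot\nabla S_\delta(w)\ge0$. Second, the last inequality is not a Lipschitz bound, since only $f'\le L$ is assumed; instead it uses $(f(r)-f(s))\operatorname{sign}(r-s)\le L|r-s|$, which follows from the one-sided bound. The time-derivative term needs a chain rule for $\int\Phi_\delta(w)$ with $\Phi_\delta'=S_\delta$, valid in the duality $L^2(W^{1,2}_0)+L^{p'}$ against $L^2(W^{1,2}_0)\cap L^p$. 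We expect this justification, together with the behaviour on $\{w=0\}$, to be the main technical obstacle, though both are standard.

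Next we integrate this inequality over $(s,2\ell)$ for $s\in(0,\ell)$, which gives
\[
\int_\ell^{2\ell}\|\sigma_u-\sigma_v\|_{L^1}\,dt\le \|w(s)\|_{L^1}+L\int_s^{2\ell}\|w\|_{L^1}\,dt .
\]
Averaging in $s$ over $(0,\ell)$ turns the first term into $\frac1\ell\int_0^\ell\|w\|_{L^1}$. For the second term we bound $\|w(t)\|_{L^1}\le|\Omega|^{1/2}\|w(t)\|_{L^2}$, and for $t\in(0,2\ell)$ we use \eqref{est:differ} from the uniqueness proof, which gives $\|w(t)\|_{L^2}\le e^{2L\ell}\|w(\tau)\|_{L^2}$ whenever $t-\tau\le 2\ell$. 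Averaging this over $\tau\in(0,\ell)$ for $t\ge\tau$ yields $\|w(t)\|_{L^2}\le e^{2L\ell}\ell^{-1}\int_0^\ell\|w\|_{L^2}$ for $t\in[\ell,2\ell]$; for $t<\ell$ we keep the integral directly. Collecting terms, with $\int_0^{2\ell}\|w\|_{L^2}\le 2e^{2L\ell}\int_0^\ell\|w\|_{L^2}$, we obtain the coefficient $\frac1\ell+2Le^{2L\ell}$ multiplying $|\Omega|^{1/2}\int_0^\ell\|w\|_{L^2}\,dt$. The exact bookkeeping of constants will be adjusted to match \eqref{eq:L1-sigma-integrated}.
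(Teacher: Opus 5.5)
Your treatment of the diffusion term, the one-sided bound $(f(r)-f(s))\operatorname{sign}(r-s)\le L|r-s|$, and the integration over $(s,2\ell)$ followed by averaging in $s$ all match the paper. Your bookkeeping does produce the constant $\frac1\ell+2Le^{2L\ell}$. The genuine gap is the level set $\{w=0\}$.

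Testing the limit equation with $S_\delta(w)$ and letting $\delta\to0$ only gives $\int_{\{w\neq0\}}|\sigma_u-\sigma_v|\,dx$ on the left. The lemma needs the full $L^1$ norm, and so does its use in Lemma~\ref{thm:SmoothingProperty}, where all of $\sigma_u-\sigma_v$ enters the decomposition of $\partial_t w$. This is not a marginal case. For $\sigma=\operatorname{sign}$, solutions can vanish on sets of positive measure (cf.\ \eqref{equi1d}), and there $\sigma_u,\sigma_v\in[-1,1]$ are not fixed by the graph.

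Your proposed justification does not close this. Stampacchia's lemma gives $\nabla w=0$ a.e.\ on $\{w=0\}$, but the equation involves $\Delta w$, not $\nabla w$. Moreover, $\Delta w$ and $\partial_t w$ are in general only distributions (in $L^2(0,T;W^{-1,2}(\Omega))$ up to an $L^p$ part). So the statement ``$\partial_t w=\Delta w=0$ a.e.\ on $\{w=0\}$'' is not even meaningful. What you would actually need is the fine property that $g=\partial_t w-\kappa\Delta w\in L^1$ vanishes a.e.\ on $\{w=0\}$. That is a delicate parabolic analogue of Brezis--Ponce-type results for $\Delta w\in L^1$, not a routine step, and the truncation test you describe does not yield it.

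The paper avoids the issue by doing your computation on the Yosida approximations $u_\varepsilon$, $v_\varepsilon$. Since $\sigma_\varepsilon$ is single-valued, $w_\varepsilon=0$ forces $\sigma_\varepsilon(u_\varepsilon)=\sigma_\varepsilon(v_\varepsilon)$. Hence $(\sigma_\varepsilon(u_\varepsilon)-\sigma_\varepsilon(v_\varepsilon))S_\delta(w_\varepsilon)\to|\sigma_\varepsilon(u_\varepsilon)-\sigma_\varepsilon(v_\varepsilon)|$ a.e., with no level-set problem. The chain rule is also easy there, because $\partial_t w_\varepsilon\in L^2(0,T;W^{-1,2}(\Omega))$.

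After integrating in time and averaging in $s$ exactly as you do, one lets $\varepsilon\to0$. The right-hand side converges by the strong convergence $w_\varepsilon\to w$ in $L^2((0,2\ell)\times\Omega)$. The left-hand side is handled by the weak convergence $\sigma_\varepsilon(u_\varepsilon)-\sigma_\varepsilon(v_\varepsilon)\rightharpoonup\sigma_u-\sigma_v$ in $L^p$, together with weak lower semicontinuity of $g\mapsto\int_\ell^{2\ell}\|g\|_{L^1(\Omega)}\,dt$; uniqueness identifies the limits. The multivaluedness of $\Gamma$ on $\{w=0\}$ is thus absorbed by lower semicontinuity rather than resolved pointwise. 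With this change, the rest of your argument goes through unchanged.
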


\begin{proof}
We prove the estimate first on the level of the Yosida approximations. Let
$u_\varepsilon$ and $v_\varepsilon$ be the solutions of the approximate problems
corresponding to the initial data of $u$ and $v$, respectively. Set
\[
        w_\varepsilon=u_\varepsilon-v_\varepsilon .
\]
Then
\begin{equation}
\label{eq:approx-difference}
        \partial_tw_\varepsilon
        -\kappa\Delta w_\varepsilon
        +
        \sigma_\varepsilon(u_\varepsilon)
        -
        \sigma_\varepsilon(v_\varepsilon)
        =
        f(u_\varepsilon)-f(v_\varepsilon).
\end{equation}
For $\delta>0$, define
\[
        \phi_\delta(r)
        =
        \frac{r}{\sqrt{\delta^2+r^2}}.
\]
This is a smooth approximation to $\operatorname{sign} r = r/|r|$.
Clearly, $\phi_\delta(w_\varepsilon) \in W^{1,2}_0(\Omega)$ is an
admissible test function for \eqref{eq:approx-difference}, which gives
\begin{align*}
        \frac{d}{dt}
        \int_\Omega
        \sqrt{\delta^2+|w_\varepsilon|^2}\,dx
        &+
        \kappa\delta^2
        \int_\Omega
        \frac{|\nabla w_\varepsilon|^2}
        {(\delta^2+|w_\varepsilon|^2)^{3/2}}
        \,dx
        \\
        &+
        \int_\Omega
        \bigl(
        \sigma_\varepsilon(u_\varepsilon)
        -
        \sigma_\varepsilon(v_\varepsilon)
        \bigr)
        \phi_\delta(w_\varepsilon)\,dx
        \\
        &=
        \int_\Omega
        \bigl(
        f(u_\varepsilon)-f(v_\varepsilon)
        \bigr)
        \phi_\delta(w_\varepsilon)\,dx .
\end{align*}
The diffusion term is nonnegative. Moreover, the assumption
$f'(r)\leq L$ implies
\[
    \bigl(f(r)-f(s)\bigr)\operatorname{sign}(r-s)
    \leq L|r-s|
    \qquad\text{for all }r,s\in\mathbb{R}.
\]
Since $\phi_\delta(r)$ has the same sign as $r$ and
$|\phi_\delta(r)|\leq 1$, it follows that
\[
\int_\Omega
    \bigl(f(u_\varepsilon)-f(v_\varepsilon)\bigr)
    \phi_\delta(w_\varepsilon)\,dx
\leq L\|w_\varepsilon\|_{L^1(\Omega)}.
\]
On the other hand, the monotonicity of $\sigma_\varepsilon$ gives
\[
\bigl(\sigma_\varepsilon(u_\varepsilon)
      -\sigma_\varepsilon(v_\varepsilon)\bigr)
      (u_\varepsilon-v_\varepsilon)\geq 0.
\]
Furthermore, since $\sigma_\varepsilon$ is single-valued,
$u_\varepsilon=v_\varepsilon$ implies
$\sigma_\varepsilon(u_\varepsilon)
 =\sigma_\varepsilon(v_\varepsilon)$. Consequently,
\[
\lim_{\delta\to0^+}
\bigl(\sigma_\varepsilon(u_\varepsilon)
      -\sigma_\varepsilon(v_\varepsilon)\bigr)
      \phi_\delta(w_\varepsilon)
=
\left|
\sigma_\varepsilon(u_\varepsilon)
-\sigma_\varepsilon(v_\varepsilon)
\right|
\quad\text{a.e. in }\Omega.
\]
Therefore, letting $\delta\to0^+$ in the preceding inequality, we obtain,
in the sense of distributions in time,
\begin{equation} \label{eq:L1-diff}
\frac{d}{dt}\|w_\varepsilon(t)\|_{L^1(\Omega)}
+
\|\sigma_\varepsilon(u_\varepsilon(t))
  -\sigma_\varepsilon(v_\varepsilon(t))\|_{L^1(\Omega)}
\leq
L\|w_\varepsilon(t)\|_{L^1(\Omega)}.
\end{equation}
Integrating \eqref{eq:L1-diff} over $(s,2\ell)$, where
$0<s<\ell$, gives
\[
        \int_s^{2\ell}
        \|
        \sigma_\varepsilon(u_\varepsilon(t))
        -
        \sigma_\varepsilon(v_\varepsilon(t))
        \|_{L^1(\Omega)}
        \,dt
        \leq
        \|w_\varepsilon(s)\|_{L^1(\Omega)}
        +
        L
        \int_s^{2\ell}
        \|w_\varepsilon(t)\|_{L^1(\Omega)}
        \,dt .
\]
Restricting the left-hand side to $(\ell,2\ell)$ and using the continuous-dependence estimate established in the proof of Theorem \ref{thm:exist_uni}
\[
        \|w_\varepsilon(t)\|_{L^1(\Omega)}
        \leq
        |\Omega|^{1/2}\|w_\varepsilon(t)\|_{L^2(\Omega)}
        \leq
        e^{L(t-s)}|\Omega|^{1/2}
        \|w_\varepsilon(s)\|_{L^2(\Omega)}
\]
for $s<t<2\ell$, we obtain
\[
        \int_\ell^{2\ell}
        \|
        \sigma_\varepsilon(u_\varepsilon(t))
        -
        \sigma_\varepsilon(v_\varepsilon(t))
        \|_{L^1(\Omega)}
        \,dt
        \leq
        \left(
        1+2L\ell e^{2L\ell}
        \right)
        |\Omega|^{1/2}
        \|w_\varepsilon(s)\|_{L^2(\Omega)} .
\]
Integrating this inequality with respect to $s\in(0,\ell)$ and dividing by $\ell$ gives
\begin{equation}
\label{eq:L1-approx-integrated}
        \int_\ell^{2\ell}
        \|
        \sigma_\varepsilon(u_\varepsilon(t))
        -
        \sigma_\varepsilon(v_\varepsilon(t))
        \|_{L^1(\Omega)}
        \,dt
        \leq
        \left(
        \frac1\ell+2Le^{2L\ell}
        \right)
        |\Omega|^{1/2}
        \int_0^\ell
        \|w_\varepsilon(s)\|_{L^2(\Omega)}
        \,ds .
\end{equation}

It remains to pass to the limit $\varepsilon\to0$. By the compactness obtained in the
existence proof,
\[
        w_\varepsilon\to w
        \qquad
        \text{strongly in }L^2((0,2\ell)\times\Omega),
\]
and
\[
        \sigma_\varepsilon(u_\varepsilon)
        -
        \sigma_\varepsilon(v_\varepsilon)
        \rightharpoonup
        \sigma_u-\sigma_v
        \qquad
        \text{weakly in }L^p((0,2\ell)\times\Omega).
\]
The functional
\[
        \Phi(g)
        =
        \int_\ell^{2\ell}\int_\Omega |g(t,x)|\,dx\,dt
\]
is convex and weakly lower semicontinuous on $L^p((0,2\ell)\times\Omega)$. Therefore,
\[
        \int_\ell^{2\ell}
        \|\sigma_u(t)-\sigma_v(t)\|_{L^1(\Omega)}
        \,dt
        \leq
        \liminf_{\varepsilon\to0}
        \int_\ell^{2\ell}
        \|
        \sigma_\varepsilon(u_\varepsilon(t))
        -
        \sigma_\varepsilon(v_\varepsilon(t))
        \|_{L^1(\Omega)}
        \,dt .
\]
On the other hand, the strong convergence of $w_\varepsilon$ in
$L^2((0,2\ell)\times\Omega)$ implies
\[
        \int_0^\ell
        \|w_\varepsilon(s)\|_{L^2(\Omega)}\,ds
        \to
        \int_0^\ell
        \|w(s)\|_{L^2(\Omega)}\,ds .
\]
Passing to the limit in \eqref{eq:L1-approx-integrated} gives
\eqref{eq:L1-sigma-integrated}.
\end{proof}

Higher regularity of solutions can now be established,
testing the equation with time derivative, and employing
a suitable regularization of the term $\sigma$. This is
the content of the next two lemmata.


\begin{lemma}[Yosida potentials]\label{lem:Yosida-potential}
There exists a proper lower semicontinuous convex function
$\psi:\mathbb{R}\to[0,+\infty)$ such that
\[
\partial\psi=\Gamma.
\]
For $\varepsilon>0$, let
\[
\psi_\varepsilon(r)
:=
\inf_{s\in\mathbb{R}}
\left\{
\psi(s)+\frac{1}{2\varepsilon}|r-s|^2
\right\}.
\]
Then $\psi_\varepsilon\in C^1(\mathbb{R})$,
\[
\psi_\varepsilon'(r)=\sigma_\varepsilon(r),
\quad \psi_\varepsilon(r)
= \psi(J_\varepsilon r)
+\frac{\varepsilon}{2}|\sigma_\varepsilon(r)|^2, \quad 
0\leq\psi_\varepsilon(r)\leq\psi(r),
\]
and there exists a constant $C>0$, independent of
$\varepsilon\in(0,1]$, such that
\begin{equation}
0\leq\psi_\varepsilon(r)
\leq C\bigl(1+|r|^{p'}\bigr),
\qquad r\in\mathbb{R}.
\label{eq:PotentialGrowth}
\end{equation}
\end{lemma}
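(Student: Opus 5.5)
The plan is to build $\psi$ explicitly from $\Gamma$, use the coercivity \eqref{lower-G} to show that $\Gamma$ has full domain and controlled growth, and then quote the standard Moreau--Yosida theory.

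\textbf{Step 1: growth of $\Gamma$ and full domain.} If $(u,\sigma)\in\Gamma$, then \eqref{lower-G} gives $c_1|\sigma|^p\le |\sigma||u|+c_0$. By Young's inequality this yields $|\sigma|^{p-1}\le C(1+|u|)$, that is,
\[
|\sigma|\le C\bigl(1+|u|\bigr)^{p'-1}.
\]
So every section $\Gamma(r)$ is bounded, locally uniformly in $r$. A maximal monotone graph in $\rr^2$ whose domain had a finite endpoint would contain a vertical half-line at that endpoint, which contradicts this bound. Hence $D(\Gamma)=\rr$. For each $r$, the set $\Gamma(r)$ is then a compact interval $[\sigma^-(r),\sigma^+(r)]$, and $\sigma^\pm$ are nondecreasing.

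\textbf{Step 2: the potential.} Set $\tilde\psi(r)=\int_0^r\sigma^0(s)\,ds$, where $\sigma^0$ is the minimal section; it is nondecreasing and locally bounded, so the integral is defined. The function $\tilde\psi$ is convex, finite and continuous. Its one-sided derivatives are $\sigma^0(r\pm)=\sigma^\pm(r)$, so $\partial\tilde\psi(r)=[\sigma^-(r),\sigma^+(r)]=\Gamma(r)$. (Equivalently, cite Rockafellar's theorem that maximal cyclically monotone sets, which in 1D are exactly the maximal monotone ones, are subdifferentials.)

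To obtain $\psi\ge0$, use \eqref{lower-G} once more: for $|u|$ large, $\sigma u\ge c_1'|u|^{p'}-c_0>0$, so $\sigma$ has the sign of $u$. Thus $\tilde\psi$ is increasing for large positive $r$ and decreasing for large negative $r$, and it attains its minimum. Define $\psi=\tilde\psi-\min\tilde\psi$; this does not change $\partial\psi$. Integrating the bound from Step 1 gives $\psi(r)\le C(1+|r|^{p'})$.

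\textbf{Step 3: Moreau envelope.} Here I would cite Brezis, Prop.~2.11, or Barbu, Ch.~2. For a proper lsc convex $\psi$, the envelope $\psi_\varepsilon$ is convex and $C^1$ with $\psi_\varepsilon'=\sigma_\varepsilon$. The infimum is attained at $s=J_\varepsilon r$, and since $r-J_\varepsilon r=\varepsilon\sigma_\varepsilon(r)$, this gives $\psi_\varepsilon(r)=\psi(J_\varepsilon r)+\frac{\varepsilon}{2}|\sigma_\varepsilon(r)|^2$. The lower bound $\psi_\varepsilon\ge0$ follows from $\psi\ge0$. Taking $s=r$ in the infimum gives $\psi_\varepsilon\le\psi$. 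Combining this with Step 2 yields \eqref{eq:PotentialGrowth}, with a constant independent of $\varepsilon$.

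\textbf{Main obstacle.} The only nonroutine point is Step 1: showing that coercivity forces $D(\Gamma)=\rr$ together with polynomial growth of order $p'-1$. This is what makes $\psi$ real-valued, as the statement requires, and gives the $|r|^{p'}$ bound. I would argue it through the vertical-half-line characterization of maximal monotone graphs at endpoints of their domain.
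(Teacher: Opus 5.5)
Your proposal is correct and follows essentially the same route as the paper. Coercivity gives $|\sigma|\le C(1+|r|^{p'-1})$ on $\Gamma$, hence $D(\Gamma)=\mathbb{R}$ and a finite potential that can be shifted to be nonnegative. Integrating the growth bound gives $\psi\le C(1+|r|^{p'})$, and the standard Moreau--Yosida properties finish the argument; your explicit minimal-section construction of $\psi$ is just a concrete version of the subdifferential representation the paper cites. One small correction: at a finite endpoint not belonging to $D(\Gamma)$, a maximal monotone graph need not contain a vertical half-line (e.g.\ the graph of $\tan$ on $(-\pi/2,\pi/2)$). What is true, and what your locally uniform bound actually contradicts, is that $\Gamma$ becomes unbounded near that endpoint.
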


\begin{proof}
Since $\Gamma\subset\mathbb{R}^2$ is maximal monotone, it is the
subdifferential of a proper lower semicontinuous convex function
$\psi$. We first observe that $\operatorname{dom}\Gamma=\mathbb{R}$.
Indeed, if $(r,\eta)\in\Gamma$, then \eqref{lower-G} and Young's inequality give
\[
c_1|\eta|^p
\leq \eta r+c_0
\leq \frac{c_1}{2}|\eta|^p+C|r|^{p'}+c_0.
\]
Hence,
\[
|\eta|\leq C\bigl(1+|r|^{p'-1}\bigr).
\]
Thus, the values of $\Gamma$ remain bounded when $r$ ranges over a
bounded set. A maximal monotone graph on $\mathbb{R}$ with a finite
endpoint of its domain must be unbounded at that endpoint.
Consequently, $\operatorname{dom}\Gamma=\mathbb{R}$.\\

It follows that $\psi$ is finite and locally absolutely continuous
on $\mathbb{R}$. Assumption \eqref{lower-G} also implies that $\psi$ is
coercive. After subtracting its minimum, we may therefore assume that
$\psi\geq0$. Moreover,
\[
\psi'(r)\in\Gamma(r)
\qquad\text{for almost every }r\in\mathbb{R},
\]
and the preceding estimate gives
\[
|\psi'(r)|
\leq C\bigl(1+|r|^{p'-1}\bigr)
\qquad\text{for almost every }r\in\mathbb{R}.
\]
Integrating this inequality from $0$ to $r$ yields
\[
0\leq\psi(r)\leq C\bigl(1+|r|^{p'}\bigr).
\]
The standard properties of the Moreau--Yosida regularization give
\[
\psi_\varepsilon\in C^1(\mathbb{R}),
\qquad
\psi_\varepsilon'=\sigma_\varepsilon,
\qquad
\psi_\varepsilon(r)
=
\psi(J_\varepsilon r)
+\frac{\varepsilon}{2}|\sigma_\varepsilon(r)|^2,
\]
and $0\leq\psi_\varepsilon\leq\psi$. This proves \eqref{eq:PotentialGrowth}.
\end{proof}


\begin{lemma}[Higher energy estimate]\label{lem:higher-energy}
Let $u_\varepsilon$ be a solution of the approximate problem
\emph{(3.2)}. Then
\begin{equation}
\frac{1}{2}
\|\partial_tu_\varepsilon(t)\|_{L^2(\Omega)}^2
+
\frac{d}{dt}
\left[
\frac{\kappa}{2}
\|\nabla u_\varepsilon(t)\|_{L^2(\Omega)}^2
+
\int_\Omega\psi_\varepsilon(u_\varepsilon(t))\,dx
\right]
\leq
\frac{c_2^2}{2}|\Omega|
\label{eq:TestByDerivative}
\end{equation}
for almost every $t>0$.
\end{lemma}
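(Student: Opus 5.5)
The plan is to test the approximate equation \eqref{eq:approximate-problem} with $\partial_t u_\varepsilon$ and integrate over $\Omega$. Formally this produces four terms:
\begin{align*}
\int_\Omega \partial_t u_\varepsilon \,\partial_t u_\varepsilon\,dx &= \|\partial_t u_\varepsilon\|_{L^2(\Omega)}^2, \\
-\kappa\int_\Omega \Delta u_\varepsilon\, \partial_t u_\varepsilon\,dx &= \frac{d}{dt}\,\frac{\kappa}{2}\|\nabla u_\varepsilon\|_{L^2(\Omega)}^2, \\
\int_\Omega \sigma_\varepsilon(u_\varepsilon)\,\partial_t u_\varepsilon\,dx &= \frac{d}{dt}\int_\Omega \psi_\varepsilon(u_\varepsilon)\,dx.
\end{align*}
The third identity uses $\psi_\varepsilon'=\sigma_\varepsilon$ from Lemma~\ref{lem:Yosida-potential} together with the chain rule. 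The fourth term is the right-hand side, which Cauchy--Schwarz, $|f|\le c_2$ and Young's inequality bound by
\[
\int_\Omega f(u_\varepsilon)\,\partial_t u_\varepsilon\,dx \le c_2|\Omega|^{1/2}\|\partial_t u_\varepsilon\|_{L^2(\Omega)} \le \frac12\|\partial_t u_\varepsilon\|_{L^2(\Omega)}^2+\frac{c_2^2}{2}|\Omega|.
\]
Absorbing the half of $\|\partial_t u_\varepsilon\|^2$ into the left-hand side yields exactly \eqref{eq:TestByDerivative}.

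The real work is justifying this computation, since a weak solution $u_\varepsilon$ need not have $\partial_t u_\varepsilon\in L^2$, so $\partial_t u_\varepsilon$ is not a priori an admissible test function. I would carry out the estimate at the Galerkin level, using eigenfunctions of the Dirichlet Laplacian as the basis. Let $u^n$ be the Galerkin approximation with projected initial data $P_n u_0$. It solves a finite-dimensional ODE with locally Lipschitz right-hand side, because $\sigma_\varepsilon$ is globally Lipschitz and $f$ is $C^1$ and bounded. Hence $u^n$ is $C^1$ in time with values in $\operatorname{span}\{e_1,\dots,e_n\}$, and $\partial_t u^n$ is a legitimate test function. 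Testing the Galerkin system with $\partial_t u^n$ is allowed because $\partial_t u^n$ lies in the Galerkin space. The identity $\int_\Omega(-\Delta u^n)\partial_t u^n = \frac12\frac{d}{dt}\|\nabla u^n\|^2$ then holds classically. The chain rule $\frac{d}{dt}\int_\Omega\psi_\varepsilon(u^n)=\int_\Omega\sigma_\varepsilon(u^n)\partial_t u^n$ also holds, since $\psi_\varepsilon\in C^1$ with Lipschitz derivative and $u^n$ is smooth in $t$ with values in $C^1(\overline\Omega)$. This gives \eqref{eq:TestByDerivative} for $u^n$ at every $t$.

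To pass $n\to\infty$ I would use the integrated form: for every nonnegative $\varphi\in C_c^\infty(0,\infty)$,
\[
\int \Bigl(\tfrac12\|\partial_t u^n\|^2\varphi - E_n\varphi'\Bigr)\,dt \le \frac{c_2^2}{2}|\Omega|\int\varphi\,dt,
\]
where $E_n$ denotes the bracketed energy. Convergence of the energy term is the main obstacle, because only $u^n\to u_\varepsilon$ strongly is available and the energy needs to converge in a suitable sense. If $u_0\in W^{1,2}_0$, then integrating the estimate bounds $\partial_t u^n$ in $L^2L^2$ and $u^n$ in $L^\infty W^{1,2}$. Aubin--Lions then gives strong convergence in $L^2(0,T;L^2)$. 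Testing with $-\Delta u^n$ gives an $L^2(0,T;W^{2,2})$ bound, which I expect to upgrade this to strong $L^2(0,T;W^{1,2})$ convergence, and then $\int E_n\varphi'\to\int E\varphi'$. Here $\int\psi_\varepsilon(u^n)\to\int\psi_\varepsilon(u_\varepsilon)$ holds by the quadratic growth of $\psi_\varepsilon$ for fixed $\varepsilon$ and strong $L^2$ convergence. The term $\|\partial_t u^n\|^2$ is handled by weak lower semicontinuity. For general $u_0\in L^2$, I would use the smoothing effect: $u_\varepsilon(t_0)\in W^{1,2}_0$ for a.e.\ $t_0>0$ by \eqref{eq:uniform-estimates}, so I restart from such $t_0$ and, by uniqueness, obtain the inequality on $(t_0,\infty)$ for a.e.\ small $t_0$. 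This yields \eqref{eq:TestByDerivative} for a.e.\ $t>0$ in the distributional sense.
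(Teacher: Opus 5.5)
Your proposal is correct and follows the same route as the paper. You test the approximate equation with $\partial_t u_\varepsilon$, use $\psi_\varepsilon'=\sigma_\varepsilon$, and absorb $\int_\Omega f(u_\varepsilon)\,\partial_t u_\varepsilon\,dx$ via Young's inequality, while the paper only says the computation is justified at the Galerkin level and passed to the limit; your details (strong $L^2(0,T;W^{1,2})$ convergence for $W^{1,2}_0$ data, and restarting from a.e.\ $t_0>0$ for $L^2$ data) simply fill in what the paper leaves implicit.
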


\begin{proof}
The following computation is first justified for the Galerkin
approximations and then passed to the limit. Testing (3.2) by $\partial_tu_\varepsilon$ and using
$\psi_\varepsilon'=\sigma_\varepsilon$, we obtain
\[
\|\partial_tu_\varepsilon\|_{L^2(\Omega)}^2
+
\frac{d}{dt}
\left[
\frac{\kappa}{2}
\|\nabla u_\varepsilon\|_{L^2(\Omega)}^2
+
\int_\Omega\psi_\varepsilon(u_\varepsilon)\,dx
\right]
=
\int_\Omega
f(u_\varepsilon)\partial_tu_\varepsilon\,dx.
\]
Since $|f(r)|\leq c_2$, Young's inequality gives
\[
\int_\Omega
f(u_\varepsilon)\partial_tu_\varepsilon\,dx
\leq
\frac{1}{2}
\|\partial_tu_\varepsilon\|_{L^2(\Omega)}^2
+
\frac{c_2^2}{2}|\Omega|,
\]
which proves (3.17).
\end{proof}

We are now ready to establish existence of the global attractor 
to \eqref{eq1}.  Recall that $S_t$ is a solution semigroup,
which is well-defined and continuous in $L^2(\Omega)$, in view
Theorem~\ref{thm:exist_uni} and the estimate \eqref{est:differ}.


\begin{lemma}
There exist positively invariant and absorbing sets
$\mathcal{B}_0$, $\mathcal{B}\subset L^2(\Omega)$, 
such that:
\begin{enumerate}
\item $\mathcal{B}$ is bounded and closed in $L^2(\Omega)$, and
\item $\mathcal{B} \subset \mathcal{B}_0$ is closed in $L^2(\Omega)$
and bounded in $W^{1,2}(\Omega)$.
\end{enumerate}
\end{lemma}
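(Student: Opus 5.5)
The plan is to derive two dissipative estimates at the level of the Yosida approximations \eqref{eq:approximate-problem}, uniform in $\varepsilon$, and then pass to the limit $\varepsilon\to0$ using the convergences from the proof of Theorem~\ref{thm:exist_uni}. The first is an $L^2$ absorbing estimate, which gives $\mathcal{B}_0$. The second is a $W^{1,2}$ estimate obtained from Lemma~\ref{lem:higher-energy} by a uniform Gronwall argument, which gives $\mathcal{B}$.

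\emph{Step 1 ($L^2$ absorption).} Start from \eqref{eq:energy-estimate}. Discard the nonnegative $\sigma_\varepsilon$ and $J_\varepsilon$ terms. Use Poincar\'e's inequality $\kappa\|\nabla u\|^2\ge \kappa\lambda_1\|u\|^2$ and Young's inequality on $c_2|\Omega|^{1/2}\|u\|$. This yields
\[
\frac{d}{dt}\|u_\varepsilon\|_{L^2(\Omega)}^2+\kappa\lambda_1\|u_\varepsilon\|_{L^2(\Omega)}^2+\kappa\|\nabla u_\varepsilon\|_{L^2(\Omega)}^2+2c_1\|\sigma_\varepsilon(u_\varepsilon)\|_{L^p}^p+2c_1'\|J_\varepsilon u_\varepsilon\|_{L^{p'}}^{p'}\le K,
\]
where $K$ depends only on $c_0,c_2,|\Omega|,\kappa\lambda_1$. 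Gronwall then gives $\|u_\varepsilon(t)\|^2\le e^{-\kappa\lambda_1 t}\|u_0\|^2+K/(\kappa\lambda_1)$. Passing to the limit via weak lower semicontinuity (using $u_\varepsilon(t)\rightharpoonup u(t)$ for every $t$), the same bound holds for $u(t)=S_tu_0$. Let $R_0^2=2K/(\kappa\lambda_1)$. Take $\mathcal{B}_0$ to be the closure of $\bigcup_{t\ge0}S_t\{\|u_0\|\le R_0\}$. It is bounded by $R_0^2\cdot 2$-type constants, positively invariant, closed by construction, and absorbing, since every bounded set enters the ball of radius $R_0$ in finite time.

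\emph{Step 2 ($W^{1,2}$ absorption).} Integrating the Step~1 inequality over $(t,t+1)$ bounds
\[
\int_t^{t+1}\Big(\kappa\|\nabla u_\varepsilon\|^2+\|J_\varepsilon u_\varepsilon\|_{L^{p'}}^{p'}+\varepsilon\|\sigma_\varepsilon(u_\varepsilon)\|^2\Big)\,ds
\]
uniformly for $u_0\in\mathcal{B}_0$. The $\varepsilon\|\sigma_\varepsilon\|^2$ term is controlled because it appears in $\sigma_\varepsilon(r)r$ in \eqref{eq:coercivity-yosida}; I will keep it rather than discard it. By Lemma~\ref{lem:Yosida-potential}, $\psi_\varepsilon(r)=\psi(J_\varepsilon r)+\frac\varepsilon2|\sigma_\varepsilon(r)|^2\le C(1+|J_\varepsilon r|^{p'})+\frac\varepsilon2|\sigma_\varepsilon(r)|^2$. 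Hence the time integral of
\[
y_\varepsilon(t)=\frac\kappa2\|\nabla u_\varepsilon(t)\|^2+\int_\Omega\psi_\varepsilon(u_\varepsilon(t))\,dx
\]
over $(t,t+1)$ is bounded uniformly. This step is the main subtlety: the crude bound $\psi_\varepsilon\le C(1+|r|^{p'})$ would require $u_\varepsilon\in L^{p'}$ directly, which is not available when $p'>2$. The identity for $\psi_\varepsilon$ circumvents this. Next, Lemma~\ref{lem:higher-energy} gives $y_\varepsilon'\le \frac{c_2^2}{2}|\Omega|$. The uniform Gronwall lemma then yields $y_\varepsilon(t)\le R_1$ for all $t\ge1$ and all $u_0\in\mathcal{B}_0$, with $R_1$ independent of $\varepsilon$. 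Since $\psi_\varepsilon\ge0$, this bounds $\|\nabla u_\varepsilon(t)\|$. Weak lower semicontinuity in $W^{1,2}_0$ transfers the bound to $u(t)$, using that $u_\varepsilon(t)\rightharpoonup u(t)$ weakly in $L^2$ and is bounded in $W^{1,2}_0$.

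\emph{Step 3 (definition of $\mathcal{B}$).} Set $\mathcal{B}$ to be the $L^2$-closure of $\bigcup_{t\ge1}S_t\mathcal{B}_0$. It lies in $\mathcal{B}_0$ by positive invariance and closedness of $\mathcal{B}_0$, and it is positively invariant. It is absorbing, since $\mathcal{B}_0$ absorbs and $\mathcal{B}$ absorbs $\mathcal{B}_0$ after time $1$. Finally, it remains bounded in $W^{1,2}$, because the $W^{1,2}_0$-ball of radius $R_1$ is weakly closed and therefore $L^2$-closed.

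I expect Step~2 to be the main obstacle. The difficulty is justifying Lemma~\ref{lem:higher-energy} with initial data only in $L^2$, together with the uniform-in-$\varepsilon$ control of $\int\psi_\varepsilon$. I would handle this via the identity for $\psi_\varepsilon$ and the time-averaged bound described above, rather than through any pointwise-in-time bound.
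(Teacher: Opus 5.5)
Your proposal is correct and follows essentially the paper's argument. It uses the same $L^2$ dissipation estimate from testing by $u_\varepsilon$, the same identity $\psi_\varepsilon(r)=\psi(J_\varepsilon r)+\frac{\varepsilon}{2}|\sigma_\varepsilon(r)|^2$ to bound the time average of $\frac{\kappa}{2}\|\nabla u_\varepsilon\|^2+\int_\Omega\psi_\varepsilon(u_\varepsilon)$, and Lemma~\ref{lem:higher-energy} integrated forward before letting $\varepsilon\to0$. Your uniform Gronwall step is the averaged form of the paper's choice of a good time $\tau_\varepsilon\in(0,1)$, and your orbit-closure definitions of $\mathcal{B}_0$ and $\mathcal{B}$ are interchangeable with the paper's choices: the ball of radius $R_0$ (itself positively invariant) and $\overline{S_2\mathcal{B}_0}$.
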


\begin{proof}
We work in the normalized setting $\kappa=|\Omega|=1$. All estimates
below are first established for the approximate problem (3.2), with
constants independent of $\varepsilon$, and are then passed to the
limit $\varepsilon\to0$.

Testing (3.2) by $u_\varepsilon$ and using
\[
u_\varepsilon
=
J_\varepsilon u_\varepsilon
+\varepsilon\sigma_\varepsilon(u_\varepsilon),
\]
together with the coercivity assumption \eqref{lower-G}, the bound
$|f|\leq c_2$, Young's inequality and Poincar\'e's inequality, we
obtain constants $\alpha,c,C>0$, independent of $\varepsilon$, such
that
\[
\frac{d}{dt}\|u_\varepsilon\|_{L^2(\Omega)}^2
+\alpha\|u_\varepsilon\|_{L^2(\Omega)}^2
+c\Big(
\|\nabla u_\varepsilon\|_{L^2(\Omega)}^2
+\|\sigma_\varepsilon(u_\varepsilon)\|_{L^p(\Omega)}^p
+\|J_\varepsilon u_\varepsilon\|_{L^{p'}(\Omega)}^{p'}
+\varepsilon
 \|\sigma_\varepsilon(u_\varepsilon)\|_{L^2(\Omega)}^2
\Big)
\leq C.
\]
In particular, Gronwall's inequality gives
\[
\|u_\varepsilon(t)\|_{L^2(\Omega)}^2
\leq
e^{-\alpha t}\|u_\varepsilon(0)\|_{L^2(\Omega)}^2
+
\frac{C}{\alpha}\bigl(1-e^{-\alpha t}\bigr).
\]
The same estimate holds for the limiting weak solution. Therefore,
with
\[
R_0^2:=\frac{2C}{\alpha},
\qquad
\mathcal{B}_0:=
\left\{
u\in L^2(\Omega):
\|u\|_{L^2(\Omega)}\leq R_0
\right\},
\]
the set $\mathcal{B}_0$ is closed, positively invariant and 
absorbing in $L^2(\Omega)$, which proves the first part.

\par
We next obtain a uniform bound at a fixed positive time. Let
$u_\varepsilon(0)\in \mathcal{B}_0$. Integrating the preceding 
differential inequality over $(0,1)$ gives
\begin{equation}
\int_0^1
\Big(
\|\nabla u_\varepsilon(t)\|_{L^2(\Omega)}^2
+\|\sigma_\varepsilon(u_\varepsilon(t))\|_{L^p(\Omega)}^p
+\|J_\varepsilon u_\varepsilon(t)\|_{L^{p'}(\Omega)}^{p'}
+\varepsilon
 \|\sigma_\varepsilon(u_\varepsilon(t))\|_{L^2(\Omega)}^2
\Big)\,dt
\leq C_0
\end{equation}
where $C_0$ is independent of $\varepsilon$ 
and of the initial datum in $\mathcal{B}_0$. Hence, there exists
$\tau_\varepsilon\in(0,1)$ such that
\[
\|\nabla u_\varepsilon(\tau_\varepsilon)\|_{L^2(\Omega)}^2
+
\|J_\varepsilon
  u_\varepsilon(\tau_\varepsilon)\|_{L^{p'}(\Omega)}^{p'}
+
\varepsilon
\|\sigma_\varepsilon
  (u_\varepsilon(\tau_\varepsilon))\|_{L^2(\Omega)}^2
\leq C_0
\]

By the Moreau--Yosida identity and the potential-growth estimate from
Lemma~3.2,
\[
\psi_\varepsilon(r)
=
\psi(J_\varepsilon r)
+\frac{\varepsilon}{2}|\sigma_\varepsilon(r)|^2,
\qquad
\psi(r)\leq C\bigl(1+|r|^{p'}\bigr),
\]
and therefore
\[
\frac{1}{2}
\|\nabla u_\varepsilon(\tau_\varepsilon)\|_{L^2(\Omega)}^2
+
\int_\Omega
\psi_\varepsilon
(u_\varepsilon(\tau_\varepsilon))\,dx
\leq C_{0}.
\]
Integrating the higher-energy estimate from Lemma~3.3 over
$(\tau_\varepsilon,2)$ and using $\psi_\varepsilon\geq0$, we obtain
\[
\|\nabla u_\varepsilon(2)\|_{L^2(\Omega)}
\leq C_{B_0}.
\]
The compactness argument used in the proof of Theorem~2.2 gives
$u_\varepsilon(2)\rightharpoonup u(2)$ in $L^2(\Omega)$.
The uniform $W_0^{1,2}(\Omega)$-bound at time $2$ then yields, after
passing to a subsequence,
\[
u_\varepsilon(2)\rightharpoonup u(2)
\qquad\text{weakly in }W_0^{1,2}(\Omega).
\]
Weak lower semicontinuity therefore gives
\[
\sup_{u_0\in B_0}
\|\nabla S_2u_0\|_{L^2(\Omega)}<\infty.
\]

Define
\[
\mathcal{B}:=\overline{S_2B_0}^{\,L^2(\Omega)}.
\]
Since $S_2\mathcal{B}_0$ is bounded in $W_0^{1,2}(\Omega)$, the
Rellich--Kondrachov theorem shows that $\mathcal{B}$ is compact in
$L^2(\Omega)$. Weak lower semicontinuity also shows that $\mathcal{B}$ is
bounded in $W_0^{1,2}(\Omega)$. Since $\mathcal{B}_0$ is closed
and positively invariant, we also have 
$\mathcal{B}\subset \mathcal{B}_0$.

For every $t\geq0$, the continuity and semigroup properties of $S_t$
give
\[
S_t\mathcal{B}
\subset
\overline{S_tS_2\mathcal{B}_0}^{\,L^2(\Omega)}
=
\overline{S_2S_t\mathcal{B}_0}^{\,L^2(\Omega)}
\subset
\overline{S_2\mathcal{B}_0}^{\,L^2(\Omega)}
=\mathcal{B}\,.
\]
Thus, $\mathcal{B}$ is positively invariant. Finally, every 
bounded subset of $L^2(\Omega)$ eventually enters $\mathcal{B}_0$ 
and, two time units later, it enters $\mathcal{B}$. Hence, 
$\mathcal{B}$ is absorbing.
\end{proof}


In virtue of the above properties of the set $\mathcal B$,
it follows by standard reasoning that 
\begin{equation}
\mathcal{A} := \bigcap_{\tau \geq 0} \overline{ \bigcup_{t \geq \tau} \mathcal{S}_t \mathcal{B}}^{L^2(\Omega)} .\label{eq:ExistenceAttractor}
\end{equation}
is a global attractor, see e.g.
\cite{Temam1997} or \cite{Robinson2001}.

\section{Attractor dimension}	\label{sec4}

The aim of this section is to derive explicit estimates of the fractal dimension of a global attractor $\attr$ in terms of the problem parameters. We begin by observing that \eqref{eq1} can be rescaled so that $\kappa = |\Omega| = 1$, leaving the Lipschitz constant $L$ of the function $f$ as the only relevant parameter. To this end, we introduce new variables $T$, $X$, and $U(T, X)$ through:
\begin{equation*}
t = \tau T , \quad  x = |\Omega|^{\frac{1}{d}} X, \quad \text{and} \quad u(t, x) = \mu U(T, X) ,
\end{equation*}
where $\tau$ and $\mu$ are positive constants. Substituting these expressions into \eqref{eq1}, we find
\begin{equation*}
\frac{\mu}{\tau} \partial_T U(T, X) - \frac{\kappa \mu}{|\Omega|^{\frac{2}{d}}} \Delta_X U(T, X) + \sigma \big( \mu U(T, X) \big) = f \big( \mu U(T, X) \big).
\end{equation*}
Choosing $\mu = |\Omega|^{\frac{2}{d}} / \kappa$ and $\tau = \mu$, and introducing the functions
\begin{equation*}
F(U) = f \bigg( \frac{|\Omega|^{\frac{2}{d}}}{\kappa} U \bigg) , 
\quad \text{and} \quad 
\Sigma (U) = \sigma \bigg( \frac{|\Omega|^{\frac{2}{d}}}{\kappa} U \bigg) ,
\end{equation*}
yields the rescaled equation 
\begin{equation*}
\partial_T U -  \Delta_X U + \Sigma (U) = F (U).
\end{equation*}
Observe that the Lipschitz constant of the new function $F$ is $L |\Omega|^{\frac{2}{d}} / \kappa$.\\

With a slight abuse of notation, we henceforth denote the rescaled
domain, variables, nonlinearities and potential again by
$\Omega$, $t$, $x$, $u$, $f$, $\sigma$ and $\psi$, respectively.
Thus, throughout the remainder of this section, we assume that
$\kappa=|\Omega|=1$. We continue by recalling the energy equality for the difference $w = u - v$, where $u$ and $v$ are solutions to \eqref{eq1} on $[0, T]$ with the corresponding initial data:
\begin{equation}
\frac{1}{2}\, \frac{{\rm d}}{{\rm d}t} || w ||_{L^2(\Omega)}^2 + || \nabla w ||_{L^2(\Omega)}^2 + \int\limits_{\Omega} (\sigma_u - \sigma_v) \cdot w = \int\limits_{\Omega} ( f(u) - f(v)) \cdot w . \label{eq:EnergyDifference}
\end{equation}
In addition, as in the proof of Theorem~\ref{thm:exist_uni}, Grönwall's inequality yields the following continuous dependence estimate:
\begin{equation}
\|w(t)\|_{L^2(\Omega)}^2
\leq
e^{2L(t-s)}
\|w(s)\|_{L^2(\Omega)}^2,
\qquad
0 \leq s \leq t \leq T. \label{eq:MonotonicitySolution}
\end{equation}

We now employ the general method of $\ell$-trajectories introduced in \cite{MP2002}, see also \cite[Section 2.4]{FeireislPrazak2010}. Let $\ell$ be a positive number to be specified below; see Lemma~\ref{thm:SmoothingProperty}. We introduce the set $\mathcal{B}_\ell$ consisting of all $\ell$-trajectories starting from $\mathcal{B}$, that is:
\begin{equation*}
\mathcal{B}_\ell = \big\{ \chi \in L^2(0, \ell; L^2(\Omega)); \chi \text{ is a weak solution to } \eqref{eq1} \text{ on } [0, \ell] \text{ with }  \chi(0) \in \mathcal{B}  \big\} .
\end{equation*}
Next, we introduce two mappings that establish a
one-to-one correspondence between $\mathcal{B}$ and $\mathcal{B}_\ell$. The first mapping $b$ associates with each initial condition $u_0 \in \mathcal{B}$ the unique $\ell$-trajectory starting from $u_0$, i.e. $b : \mathcal{B} \to \mathcal{B}_\ell$ and $b (u_0) = \chi $ with $\chi (0) = u_0$. The second mapping $e$ assigns to each $\ell$-trajectory $\chi $ its endpoint at time 
$\ell$, i.e. $e : \mathcal{B}_\ell \to \mathcal{B}$ and $e(\chi) = \chi (\ell) $. Note that $\chi \in \mathcal{C}_{w}([0, \ell]; L^2(\Omega))$, so the previous equalities are well-defined. It follows from \eqref{eq:MonotonicitySolution} that both mappings $b$ and $e$ are Lipschitz continuous. Therefore, invoking \cite[Lemma 1.2]{MP2002},
\begin{equation}
\operatorname{dim}_f^{L^2(\Omega)} (\mathcal{A})= \operatorname{dim}_f^{L^2(0, \ell; L^2(\Omega))} (\mathcal{A}_\ell) , \label{eq:DimensionLipschitz}
\end{equation}
where $\mathcal{A}_\ell = b(\mathcal{A})$.

The main ingredient in the dimension estimate is the following lemma, which establishes the so-called smoothing property; see \cite[Condition (A6)]{MP2002} or \cite[Definition 2.17]{FeireislPrazak2010}. The key step is to estimate $\partial_t u$ by means of a duality argument in a function space that accommodates the different regularity properties of terms $f(u)$, $\Delta u$, and $\sigma$, the last of which presents the main difficulty.

\begin{lemma}\label{thm:SmoothingProperty}
Let $\ell = 1/L$, and consider $u$ and $v$ to be two weak solutions to \eqref{eq1} on $[0, 2\ell]$ corresponding to the initial conditions $u_0$, $v_0 \in \mathcal{B}$, respectively. Define the space
\begin{equation*}
Y_{\ell}
=  L^\infty(\ell, 2\ell; L^2(\Omega))  \cap L^2(\ell, 2\ell; W^{1,2}(\Omega)) \cap L^\infty(\ell, 2\ell; L^\infty(\Omega)).
\end{equation*}
Setting $w = u - v$, we have:
\begin{align}
|| w ||_{L^2(\ell, 2\ell; W^{1,2}(\Omega))} &\leq c \sqrt{L} || w ||_{L^2(0, \ell; L^2(\Omega))} , \label{eq:SmoothingFirst} \\
|| \partial_t w ||_{Y_{\ell}^*} &\leq c \sqrt{L}  || w ||_{L^2(0, \ell;  L^2(\Omega))} .\label{eq:SmoothingSecond}
\end{align}
\end{lemma}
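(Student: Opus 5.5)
The first estimate \eqref{eq:SmoothingFirst} will come from the energy identity \eqref{eq:EnergyDifference}. Using monotonicity of $\Gamma$ to drop the $\sigma$-term and $f'\le L$, I get
\[
\tfrac12 \tfrac{d}{dt}\|w\|^2_{L^2(\Omega)} + \|\nabla w\|^2_{L^2(\Omega)} \le L\|w\|^2_{L^2(\Omega)} .
\]
I integrate this over $(s,2\ell)$ with $s\in(0,\ell)$. This gives
\[
\int_\ell^{2\ell}\|\nabla w\|^2\,dt \le \tfrac12\|w(s)\|^2 + L\int_s^{2\ell}\|w\|^2\,dt .
\]
By \eqref{eq:MonotonicitySolution} with $\ell=1/L$, the last integral is bounded by $2\ell e^{4}\|w(s)\|^2$, so $L\int_s^{2\ell}\|w\|^2\le c\|w(s)\|^2$. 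Averaging over $s\in(0,\ell)$ (dividing by $\ell$) then yields $\int_\ell^{2\ell}\|\nabla w\|^2 \le cL\int_0^\ell\|w\|^2$. The $L^2$-part of the $W^{1,2}$ norm on $(\ell,2\ell)$ is handled the same way: it is bounded by $c\ell\|w(s)\|^2\le c\int_0^\ell\|w\|^2$, which is even better than needed since $L\gtrsim1$. This also gives the pointwise bound $\sup_{t\in(\ell,2\ell)}\|w(t)\|^2\le cL\int_0^\ell\|w\|^2$, which I will reuse.

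For \eqref{eq:SmoothingSecond} I write $\partial_t w = \Delta w - (\sigma_u-\sigma_v) + (f(u)-f(v))$ and test against $\varphi\in Y_\ell$ with $\|\varphi\|_{Y_\ell}\le1$, estimating each term by the component of the $Y_\ell$ norm that suits it. For the diffusion term, $|\int\!\!\int \nabla w\cdot\nabla\varphi| \le \|\nabla w\|_{L^2L^2}\|\varphi\|_{L^2W^{1,2}} \le c\sqrt L\|w\|_{L^2(0,\ell;L^2)}$ by the first part. For the reaction term I use $|f(u)-f(v)|\le L|w|$, which follows from the one-sided bound in this setting as in the paper; if only a one-sided Lipschitz bound is available I would instead use boundedness of $f$ combined with the $L^1$ approach below. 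Pairing with $\varphi\in L^\infty L^2$ gives $L\int_\ell^{2\ell}\|w\|\,dt\le L\sqrt{\ell}\,\|w\|_{L^2L^2(\ell,2\ell)}\le c\sqrt L\|w\|_{L^2(0,\ell;L^2)}$, using the sup bound above.

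The main obstacle is the $\sigma$-term, which is only controlled in $L^1$. Pairing with $\varphi\in L^\infty(\ell,2\ell;L^\infty(\Omega))$ gives
\[
\Big|\int_\ell^{2\ell}\!\!\int_\Omega(\sigma_u-\sigma_v)\varphi\Big| \le \int_\ell^{2\ell}\|\sigma_u-\sigma_v\|_{L^1(\Omega)}\,dt .
\]
Lemma~\ref{lem:L1-sigma-estimate} with $\ell=1/L$ and $|\Omega|=1$ bounds this by $(L+2Le^2)\int_0^\ell\|w\|_{L^2}\,dt\le cL\sqrt{\ell}\,\|w\|_{L^2(0,\ell;L^2)}=c\sqrt L\,\|w\|_{L^2(0,\ell;L^2)}$ by Cauchy--Schwarz. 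This is exactly why the $L^\infty$ component is included in $Y_\ell$. Finally, the duality pairing must be justified, since $\partial_t w$ lies only in $L^2W^{-1,2}+L^p$. I would do this at the Yosida level, approximating $\varphi$ by smooth functions and using that $w\in L^2W^{1,2}_0$ pairs with $W^{1,2}$ test functions (the boundary contribution vanishing for $W^{1,2}_0$ test functions), and then pass to the limit as in the proof of Lemma~\ref{lem:L1-sigma-estimate}.
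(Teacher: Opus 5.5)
Your proposal is correct and follows the paper's proof essentially step for step. For \eqref{eq:SmoothingFirst} you integrate the energy inequality over $(s,2\ell)$ and average over $s\in(0,\ell)$. For \eqref{eq:SmoothingSecond} you split $\partial_t w$ into three pieces: the diffusion term paired with $L^2W^{1,2}$, the reaction term paired with $L^\infty L^2$, and the $\sigma$-term paired with $L^\infty L^\infty$ via Lemma~\ref{lem:L1-sigma-estimate}.

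One correction. The bound $|f(u)-f(v)|\le L|w|$ does \emph{not} follow from the one-sided hypothesis $f'\le L$; the paper tacitly uses a two-sided Lipschitz bound at this point as well. Boundedness of $f$ cannot supply an estimate linear in $w$ either. Under the one-sided hypothesis alone, the right fix is to write $f(r)=Lr-g(r)$ with $g$ nondecreasing. Then run the $L^1$ argument of Lemma~\ref{lem:L1-sigma-estimate} with $\sigma+g$ in place of $\sigma$. This controls $g(u)-g(v)$ in $L^1(\ell,2\ell;L^1(\Omega))$, while the remaining term $Lw$ is handled in $L^1L^2$ exactly as you did.
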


\begin{proof}
Fix $s \in (0, \ell)$ and integrate \eqref{eq:EnergyDifference} over $(s, 2\ell)$ to find
\begin{equation*}
\int\limits_s^{2\ell} || \nabla w (t)||_{L^2(\Omega)}^2 \, {\rm d}t
\leq  || w (s) ||_{L^2(\Omega)}^2 + \frac{1}{\ell} \int\limits_s^{2\ell} || w (t)||_{L^2(\Omega)}^2 \, {\rm d}t .
\end{equation*}
Using \eqref{eq:MonotonicitySolution}, with $T = 2\ell$ and $\ell = 1/L$, we have
\begin{equation*}
|| w (t) ||_{L^2(\Omega)}^2 \leq e^4 || w (s) ||_{L^2(\Omega)}^2  \quad \text{for any} \quad  0 < s < t < 2\ell .
\end{equation*}
Hence, from the previous inequality we deduce that
\begin{equation*}
\int\limits_{\ell}^{2\ell} || \nabla w (t)||_{L^2(\Omega)}^2 \, {\rm d}t 
\leq (1 + 2 e^4) || w (s) ||_{L^2(\Omega)}^2.
\end{equation*}
Integrating this expression with respect to $s$ over $(0, \ell)$ yields \eqref{eq:SmoothingFirst} with a constant $2e^2$. 

To prove \eqref{eq:SmoothingSecond}, we first recall that the dual of an intersection is naturally identified with the sum of the corresponding dual spaces, and
\begin{equation*}
|| \partial_t w ||_{Y_{\ell}^*}
\leq ||  \partial_t w_1 ||_{L^1(\ell, 2\ell; L^2(\Omega))} +  ||  \partial_t w_2 ||_{L^2(\ell, 2\ell; W^{-1,2}(\Omega))} + ||  \partial_t w_3 ||_{L^1(\ell, 2\ell; L^1(\Omega))}  ,
\end{equation*}
where the decomposition $\partial_t w =  \partial_t w_1 +  \partial_t w_2 +  \partial_t w_3$ is arbitrary. Therefore, by \eqref{eq1}, we obtain
\begin{equation*}
|| \partial_t w ||_{Y_{\ell}^*}
\leq  ||  f(u) - f(v) ||_{L^1(\ell, 2\ell; L^2(\Omega))}  + || \Delta w ||_{L^2(\ell, 2\ell; W^{-1,2}(\Omega))} + ||  \sigma_u - \sigma_v ||_{L^1(\ell, 2\ell; L^1(\Omega))} .
\end{equation*}
We estimate each term on the right-hand side by duality as follows:
\begin{equation*}
|| \partial_t w ||_{Y_{\ell}^*}
\leq
\sup\limits_{\varphi_1} \int\limits_\ell^{2\ell} \langle f(u) - f(v) , \varphi_1 \rangle +
\sup\limits_{\varphi_2} \int\limits_\ell^{2\ell} \langle \Delta w  , \varphi_2 \rangle + 
\sup\limits_{\varphi_3} \int\limits_\ell^{2\ell} \langle  \sigma_u - \sigma_v , \varphi_3 \rangle .
\end{equation*}
Here $\varphi_1 \in L^\infty(\ell, 2\ell; L^2(\Omega))$, $\varphi_2 \in L^2(\ell, 2\ell; W^{1,2}_0(\Omega))$, and $\varphi_3 \in L^\infty(\ell, 2\ell; L^\infty(\Omega))$, where each test function is taken from the unit ball of the corresponding space. The first term is estimated using \eqref{upper-F}, Hölder's inequality, and \eqref{eq:MonotonicitySolution}:
\begin{align*}
\sup\limits_{\varphi_1} \int\limits_\ell^{2\ell} \langle f(u) - f(v) , \varphi_1 \rangle 
&\leq L \int\limits_\ell^{2\ell}  || w (t) ||_{L^2(\Omega)} \, {\rm d}t 
\leq L \bigg( \ell \int\limits_\ell^{2\ell}  || w (t) ||_{L^2(\Omega)}^2  \, {\rm d}t \bigg)^{\frac{1}{2}} \\
&= L \sqrt{\ell} \bigg( \int\limits_0^{\ell}  || w (\tau + \ell) ||_{L^2(\Omega)}^2 \, {\rm d}\tau  \bigg)^{\frac{1}{2}} 
\leq Le^2 \sqrt{\ell} \bigg( \int\limits_0^{\ell}  || w ||_{L^2(\Omega)}^2  \bigg)^{\frac{1}{2}} .
\end{align*}
For the second term, integration by parts, the zero trace of $\varphi_2$, Hölder's inequality, and \eqref{eq:SmoothingFirst} yield
\begin{equation*}
\sup\limits_{\varphi_2} \int\limits_\ell^{2\ell} \langle \Delta w  , \varphi_2 \rangle
\leq \bigg( \int\limits_\ell^{2\ell} || \nabla w ||_{L^2(\Omega)}^2  \bigg)^{\frac{1}{2}}
\leq 2e^2 \sqrt{L} \bigg( \int\limits_0^{\ell} || w ||_{L^2(\Omega)}^2  \bigg)^{\frac{1}{2}} .
\end{equation*}
For the remaining term, we invoke the crucial estimate \eqref{eq:L1-sigma-integrated} and use Hölder's inequality to achieve
\begin{align*}
\sup\limits_{\varphi_3} \int\limits_\ell^{2\ell} \langle  \sigma_u - \sigma_v , \varphi_3 \rangle
&\leq \Big( \frac{1}{\ell} +  2 L e^{2\ell L} \Big)  \int\limits_0^{\ell} ||w||_{L^2(\Omega)} \\
&\leq \Big( \frac{1}{\sqrt{\ell}} +  2 L \sqrt{\ell} e^{2\ell L} \Big)  \bigg( \int\limits_0^{\ell} ||w||_{L^2(\Omega)}^2 \bigg)^{\frac{1}{2}} .
\end{align*}
These estimates, together with $\ell = 1/L$, imply \eqref{eq:SmoothingSecond} with a constant $6e^2$. 
\end{proof}

We now conclude this section by proving the main dimension estimate using the general result from the Appendix.

\begin{proof}[Proof of Theorem~\ref{thm:Dimension}]
By \eqref{eq:ExistenceAttractor}, the global attractor $\mathcal{A}$ exists. In view of \eqref{eq:DimensionLipschitz}, we aim to show 
\begin{equation*}
\operatorname{dim}_f^{L^2(0, \ell; L^2(\Omega))} (b(\mathcal{A})) 
\leq C \Big[1  + L^{\frac{d}{2}} \log^2 L \Big] L^{\frac{d}{2}}
\end{equation*}
for $\kappa = |\Omega| = 1$. The estimate \eqref{eq:DimensionEstimate} then follows since the Lipschitz constant of $f$ scales as described at the beginning of this section.

Since $b$ is Lipschitz,  $b(\mathcal{A})$ is compact, and we can thus find a ball $B(\chi, R)$ centered at some $\chi \in \mathcal{B}_\ell$ with diameter $R > 0$ such that  $b(\mathcal{A}) \subset B(\chi, R)$. Using Lemma~\ref{thm:SmoothingProperty}, 
\begin{equation*}
b(\mathcal{A}) 
= b \Big( e \big( b(\mathcal{A}) \big) \Big)
\subset b (e (\chi) ) + R \mathcal{M} ,
\end{equation*}
where $\mathcal{M}$ is defined in Lemma~\ref{thm:LemmaNormDerivative}. Invoking  Lemma~\ref{thm:LemmaNormDerivative}, with $A$ and $B$ given by \eqref{eq:SmoothingFirst}--\eqref{eq:SmoothingSecond}, and a small parameter $\varepsilon > 0$, to be chosen later, the set on the right-hand side can be covered by $M$ balls of radii $\sqrt{2}$ in the space $L^2(0, \ell; L^2(\Omega))$ satisfying 
\begin{equation*}
\log M \leq C \Big[1 + \frac{1}{L} \varepsilon^{-\frac{2}{1 - 2\varepsilon}} \big( L^{\frac{1}{1-2\varepsilon}} + L + L^{\frac{d + 2\varepsilon}{2 (1-2\varepsilon)}} L^{\frac{1}{1-2\varepsilon}} \big)  \Big] L^{\frac{d}{2}} . 
\end{equation*}

Since $L^{ (d + 2\varepsilon) / (2 (1-2\varepsilon))} = L^{d/2} L^{ \varepsilon (1 + d) / (1-2\varepsilon)}$, we obtain
\begin{equation*}
\log M \leq C \Big[1  + \varepsilon^{-\frac{2}{1 - 2\varepsilon}}  L^{\frac{\varepsilon (3+d)}{1-2\varepsilon}} L^{\frac{d}{2}}  \Big] L^{\frac{d}{2}} . 
\end{equation*}
Now, if $0 < L \leq e^2$, the desired estimate is trivial. Let us assume that $L > e^2$, we choose $\varepsilon = 2/ (3 + d)\log L$ and have
\begin{align*}
\varepsilon^{-\frac{2}{1 - 2\varepsilon}}  L^{\frac{\varepsilon (3+d)}{1-2\varepsilon}}  
&= \exp \Big[-\frac{2}{1 - 2\varepsilon}   \log \varepsilon + \frac{\varepsilon (3+d)}{1-2\varepsilon} \log L  \Big] \\
&= \exp \Big[\frac{2}{1 - 2\varepsilon}   \log \Big(\frac{3+d}{2}\log L \Big) + \frac{2}{1-2\varepsilon}  \Big] \\
&\leq c  \exp \Big[ \Big( 2 + \frac{4\varepsilon}{1 - 2\varepsilon} \Big)  \log \big(2d\log L \big)   \Big] \\
&\leq c \log^2 L \exp \Big[ \frac{8}{(3+d) \log L - 4}  \log \big(2d\log L \big)    \Big]
\\
&\leq c \log^2 L \exp \Big[ \frac{16}{\log L^{2d}}  \log \big(\log L^{2d} \big)    \Big]
\\
&\leq c \log^2 L .
\end{align*}
Here we used the fact that $d \geq 1$ and that $\log (\log x) / \log x$ is bounded for $x > 2$. Hence,
\begin{equation*}
\log M \leq C \Big[1  + L^{\frac{d}{2}} \log^2 L \Big] L^{\frac{d}{2}} .
\end{equation*}

A simple scaling argument yields a covering with balls of radii $R/2$ and the estimate
\begin{equation*}
\log N_{L^2(0, \ell; L^2(\Omega))} \Big( b(\mathcal{A}) , \frac{R}{2} \Big)
\leq C \Big[1  + L^{\frac{d}{2}} \log^2 L \Big] L^{\frac{d}{2}}  \, .
\end{equation*}
Repeating the construction inductively yields
\begin{equation*}
\log N_{L^2(0, \ell; L^2(\Omega))} \Big( b(\mathcal{A}) , \frac{R}{2^n} \Big)
\leq C n \Big[1  + L^{\frac{d}{2}} \log^2 L \Big] L^{\frac{d}{2}} \, ,
\end{equation*}
where $n \in \mathbb{N}$. Finally, using the definition of the fractal dimension,
\begin{equation*}
\operatorname{dim}_f^{L^2(0, \ell; L^2(\Omega))} (b(\mathcal{A})) 
= \limsup\limits_{n \to +\infty} \frac{\log N_{L^2(0, \ell; L^2(\Omega))} \Big( b(\mathcal{A}) , \frac{R}{2^n} \Big)}{- \log \frac{R}{2^n}} \, ,
\end{equation*}
we see that the desired upper bound follows from the previous inequality. 
\end{proof}

\section{Appendix}	\label{sec5}
We introduce here a family of spaces equipped with a convenient Hilbertian structure, following the construction presented in \cite[Section 9.5]{FeireislPrazak2010}. The main advantage of these spaces is that they allow us to avoid estimating the time derivative directly in the rather complicated dual space appearing in Lemma~\ref{thm:SmoothingProperty}.

Let $\{ w_j \}_{ j = 1}^\infty$ be an orthonormal basis of $L^2(\Omega)$ consisting of eigenfunctions of the Dirichlet Laplacian in $\Omega \subset \mathbb{R}^d$. The corresponding eigenvalues $\{ \lambda_j \}_{ j = 1}^\infty$ satisfy
\begin{equation}
\lambda_j \sim  c |\Omega|^{-\frac{2}{d}} j^{\frac{2}{d}} \quad  \quad \text{ as } \quad  j \to +\infty , \label{eq:Assymptotics}
\end{equation}
see, for example, \cite[Corollary 17.5.8]{Hormander2007}. Hence, any function $v \in L^2(\Omega)$ admits the representation $v = \sum\limits_{j = 1}^\infty a_j w_j$, and Parseval's identity yields
\begin{equation*}
|| v ||_{L^2(\Omega)}^2
= \sum\limits_{j = 1}^\infty a_j^2   ,
\quad \text{ where }
a_j
= (v,  w_j)_{L^2(\Omega)} .
\end{equation*}
Furthermore, if $v \in W^{1,2}(\Omega)$, we have $\nabla v = \sum\limits_{j = 1}^\infty a_j \nabla w_j$. Using $|| v ||_{W^{1,2}(\Omega)}^2 = (\nabla v,  \nabla v )_{L^2(\Omega)}$ and $|| \nabla w_j ||_{W^{1,2}(\Omega)}^2 = \lambda_j || w_j ||_{L^2(\Omega)}^2 = \lambda_j$, we obtain 
\begin{equation*}
|| v ||_{W^{1,2}(\Omega)}^2
= \sum\limits_{j = 1}^\infty a_j^2  \lambda_j    ,
\quad \text{ where }
a_j
= (v,  w_j)_{L^2(\Omega)} .
\end{equation*}
In view of these facts, we introduce the space $H^\beta$, for $\beta \in \mathbb{R}$, of the functions $v$ defined on $\Omega$ by the norm
\begin{equation*}
|| v ||_{H^\beta}^2
= \sum\limits_{j = 1}^\infty a_j^2  \lambda_j^\beta , \quad \text{ where }
a_j
= (v,  w_j)_{L^2(\Omega)} .
\end{equation*}
Observe that $H^{-\beta}$ is the continuous dual of $H^\beta$, with duality induced by the generalized inner product in $L^2(\Omega)$. Moreover, $H^0$ and $H^1$ coincide with $L^2(\Omega)$ and $W_0^{1,2}(\Omega)$, respectively. In addition, from \cite[Section 3]{FeHaRo2022} it follows that
\begin{equation}
H^\beta \hookrightarrow W^{\beta, 2} (\Omega) \cap W^{1,2}_0 (\Omega), \quad \beta \geq 0. \label{eq:EmbeddingSpectralSpace}
\end{equation}

Finally, we note that for a given $d \in \mathbb{N}$, we have $|| v ||_{L^{\infty}(\Omega)} \leq c || v ||_{H^\beta}$ for every $\beta > d/2$, which follows from the embedding $W^{\beta,2}(\Omega) \hookrightarrow L^{\infty}(\Omega)$. Later, we will also need information on the dependence of the embedding constant as $\beta$ approaches $d/2$. More precisely, we claim that
\begin{equation}
|| v ||_{L^{\infty}(\Omega)} \leq \frac{c}{\sqrt{\varepsilon}} || v ||_{H^{d/2 + \varepsilon}} , \label{eq:EmbeddingBehaviour} 
\end{equation}
where $c$ is independent of $\varepsilon > 0$.  To see this, let us consider $\beta = d/2 + \varepsilon$ for a given $\varepsilon > 0$. For $v \in H^\beta$, we have
\begin{equation*}
|v(x)|
= \Big| \sum_{j = 1}^\infty a_j w_j (x) \Big |
\leq \sum_{j = 1}^\infty a_j |w_j (x)| \lambda_j^{\frac{\beta}{2}} \lambda_j^{- \frac{\beta}{2}} 
\leq 
c \Big( \sum_{j = 1}^\infty \lambda_j^{-\beta}  | w_j (x)|^2  \Big)^{\frac{1}{2}} 
\Big( \sum_{j = 1}^\infty a_j^2 \lambda_j^{\beta} \Big)^{\frac{1}{2}} .
\end{equation*}
The second factor is precisely the $H^\beta$-norm of $v$, so it remains to estimate the first factor. For this purpose, we recall the local Weyl's Law, see \cite[Theorem 17.5.3]{Hormander2007}, which states that 
\begin{equation*}
\sum_{ \lambda_j \leq \lambda }  | w_j (x)|^2 \leq c \lambda^{\frac{d}{2}}
\end{equation*}
for some $c > 0$ independent of $x \in \Omega$. We rewrite our sum using dyadic blocks and then use \eqref{eq:Assymptotics} together with the above estimate as follows:
\begin{equation*}
\sum_{j = 1}^\infty \lambda_j^{-\beta}  | w_j (x)|^2 
= \sum_{n = 0}^\infty \sum_{ 2^n \leq \lambda_j <  2^{n+1}} \lambda_j^{-\beta}  | w_j (x)|^2 
\leq c\sum_{n = 0}^\infty 2^{ - \beta n} 2^{\frac{1}{2} nd}
= \frac{c}{1 - 2^{-\varepsilon}} \, .
\end{equation*}
Since $2^{\varepsilon}/(2^{\varepsilon} - 1 ) \sim 1/\varepsilon \log 2$ as $\varepsilon \to 0_+$, we conclude that \eqref{eq:EmbeddingBehaviour} holds.

Similarly, let $\{ \varphi_k \}_{ k = 0}^\infty$ be an orthonormal basis of $L^2(0, \ell)$ given by
\begin{equation*}
\varphi_0 \equiv \frac{1}{\sqrt{\ell}} \, , \quad \varphi_k = \sqrt{\frac{2}{\ell}} \cos \frac{k \pi t}{\ell} \quad \text{ for } k \geq 1 .
\end{equation*}
We set 
\begin{equation}
\mu_0 = \frac{1}{\ell^2} \, , \quad \mu_k = \frac{k^2 \pi^2}{\ell^2} \quad \text{ for } \quad k \geq 1 .\label{eq:Assymptotics2}
\end{equation}
For $\alpha \in \mathbb{R}$, we introduce the space $I^\alpha$ of the functions $v$ defined on $(0, \ell)$, endowed with the norm
\begin{equation*}
|| v ||_{I^\alpha}^2
= \sum\limits_{k = 0}^\infty a_k^2  \mu_k^\alpha , \quad \text{ where }
a_k
= (v, \varphi_k)_{L^2(0, \ell)} .
\end{equation*}
As before, we have 
\begin{equation*}
|| v ||_{I^0}^2 = || v ||_{L^2(0, \ell)}^2 
\quad \text{ and } \quad 
|| v ||_{I^1}^2 = || \partial_t v ||_{L^2(0, \ell)}^2 .
\end{equation*}
In particular, estimate \eqref{eq:EmbeddingBehaviour} also holds for the space $I^\alpha$ with $\alpha = 1/2 + \varepsilon$, This follows from the interpolation between $I^0$ and $I^1$, together with the embedding $W^{\alpha,2}(0, \ell) \hookrightarrow L^{\infty}(0, \ell)$. 

In addition, it will be convenient to introduce the following seminorm $\dot{I}^\alpha$ by excluding the first term in the sum:
\begin{equation*}
|| v ||_{\dot{I}^\alpha}^2
= \sum\limits_{k = 1}^\infty a_k^2  \mu_k^\alpha .
\end{equation*}
We will also use the space $I_0^\alpha$, defined analogously to $I^\alpha$, with $\varphi_k$ replaced by
\begin{equation*}
\psi_k (t) = \sqrt{\frac{2}{\ell}} \sin \frac{k \pi t}{\ell} \quad \text{ for } k \geq 0 .
\end{equation*}

We now combine the above constructions to define a convenient norm for the functions $v$ defined on $(0, \ell) \times \Omega$. We set
\begin{equation*}
|| v ||_{I^\alpha (H^\beta)}^2 
= \sum\limits_{j = 1}^\infty \sum\limits_{k = 0}^\infty a_{j, k}^2  \mu_k^\alpha \lambda_j^\beta , \quad \text{ where }
a_{j, k}
= (v, w_j \varphi_k)_{L^2((0, \ell) \times \Omega)} .
\end{equation*}
Analogously, we introduce the seminorm $\dot{I}^\alpha (H^\beta)$ and the space $I_0^\alpha (H^\beta)$.

We now prove a minor extension of the covering result from \cite[Section 9.5]{FeireislPrazak2010} concerning the components of a space $X$ that will be introduced later. 

\begin{lemma}\label{thm:Covering}
Let $\alpha$, $\beta \geq 0$, $A$, $B > 0$, and
\begin{equation*}
\mathcal{M}_{\alpha, \beta}
= \{ u \in I^0(H^0); || u ||_{I^0(H^1)} \leq A  ,  || u ||_{\dot{I}^{\alpha}(H^{-\beta})} \leq B \} .
\end{equation*}
Then $\mathcal{M}_{\alpha, \beta}$ can be covered by $M_{\alpha, \beta}$ balls of radii $\sqrt{2}$ in $ I^0(H^0)$ such that
\begin{equation}
\log M_{\alpha, \beta} \leq c \big(1 + \ell A^{\frac{\beta}{\alpha}} B^{\frac{1}{\alpha}}   \big)|\Omega| A^d    . \label{eq:CoveringEstimate}
\end{equation}
\end{lemma}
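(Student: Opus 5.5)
The plan is the standard spectral truncation argument of \cite[Section 9.5]{FeireislPrazak2010}: the set is covered by a finite-dimensional ball in the ``low modes'', and the ``high modes'' are already within distance $1$.

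Expand $u\in\mathcal M_{\alpha,\beta}$ as $u=\sum_{j,k}a_{j,k}w_j\varphi_k$. Introduce cut-offs $J$ and $K$, chosen as
\[
\lambda_J \ge A^2 \quad\text{and}\quad \mu_K \ge (B\lambda_J^{\beta/2})^{2/\alpha}.
\]
Split $u=P u+(I-P)u$, where $P$ projects onto the span of $\{w_j\varphi_k: j\le J,\ k\le K\}$. The remainder consists of two pieces.
\begin{itemize}
\item For $j>J$,
\[
\sum_{j>J}\sum_k a_{j,k}^2\le \lambda_J^{-1}\sum a_{j,k}^2\lambda_j\le A^2/\lambda_J\le 1 .
\]
\item For $j\le J$ and $k>K\ge1$,
\[
\sum_{j\le J,k>K} a_{j,k}^2\le \mu_K^{-\alpha}\lambda_J^{\beta}\sum_{k\ge1}a_{j,k}^2\mu_k^\alpha\lambda_j^{-\beta}\le \mu_K^{-\alpha}\lambda_J^\beta B^2\le 1 .
\]
\end{itemize}
Hence $\|(I-P)u\|_{I^0(H^0)}^2\le 2$. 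It therefore suffices to cover $P\mathcal M_{\alpha,\beta}$ by balls of radius (slightly less than) $\sqrt2$, adjusting the constants above to $1/2$ if needed. The set $P\mathcal M_{\alpha,\beta}$ is contained in a Euclidean ball of radius $\|u\|_{I^0(H^0)}\le \lambda_1^{-1/2}A$, in dimension $D=J(K+1)$. The standard volume bound gives $\log M\le D\log(1+cA\lambda_1^{-1/2})$.

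Next, count the modes using the Weyl asymptotics \eqref{eq:Assymptotics} and \eqref{eq:Assymptotics2}. From $\lambda_j\sim c|\Omega|^{-2/d}j^{2/d}$, the condition $\lambda_J\sim A^2$ gives $J\sim c|\Omega|A^d$. From $\mu_k=k^2\pi^2/\ell^2$, the condition on $\mu_K$ gives
\[
K\sim \ell\,\mu_K^{1/2}\sim \ell\,(B A^{\beta})^{1/\alpha}=\ell A^{\beta/\alpha}B^{1/\alpha}.
\]
Thus $D\le c(1+\ell A^{\beta/\alpha}B^{1/\alpha})|\Omega|A^d$, which is exactly the structure of \eqref{eq:CoveringEstimate}.

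The main obstacle is the logarithmic factor $\log(1+A\lambda_1^{-1/2})$ coming from the radius of the finite-dimensional ball, which is absent in \eqref{eq:CoveringEstimate}. I would remove it by a dyadic refinement rather than a crude volume bound. Cover separately each dyadic spectral shell $\lambda_j\in[2^n,2^{n+1})$, where coefficients are bounded by $A2^{-n/2}$, using $\sim 2^{nd/2}|\Omega|$ modes per shell (times the temporal count). Allocate the squared radius budget across the shells as a geometric series. The entropy is then
\[
\sum_n |\Omega|2^{nd/2}\,\log\bigl(A2^{-n/2}/r_n\bigr),
\]
which sums to $\lesssim|\Omega|A^d$ without the logarithm, provided $r_n$ is chosen to decay geometrically from the top shell downward. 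This is the delicate bookkeeping step. Alternatively, if the constant $c$ in \eqref{eq:CoveringEstimate} is allowed to absorb the normalization $\lambda_1\sim|\Omega|^{-2/d}$ with $A$ measured in these units, as in the application where $A\sim\sqrt L$, the logarithm can be handled by this shell summation. The temporal direction is treated identically, with the $k=0$ mode included in the count via the $1+$ term.
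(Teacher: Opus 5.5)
Your argument is correct, but it is organized differently from the paper's.

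\textbf{The paper's route.} The paper does not truncate. It adds the two constraints to place $\mathcal M_{\alpha,\beta}$ in a single ellipsoid of $I^0(H^0)$ with semiaxes $\eta_{j,k}^{-2}=\lambda_j/(2A^2)+\mu_k^\alpha/(2B^2\lambda_j^\beta)$ (only the first term when $k=0$). It then uses the ellipsoid covering bound $M\le 3^N\prod_{n\le N}\eta_n$ of \cite[Lemma 2.35]{FeireislPrazak2010}. Next it proves the counting estimate $\#\{\eta_{j,k}\ge\Lambda\}\le c(1+\ell A^{\beta/\alpha}B^{1/\alpha})|\Omega|A^d\Lambda^{-d}$ for $\Lambda\ge1$, using Weyl in $j$ and a cut-off $K$ in $k$ bounded uniformly in $\Lambda$. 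Finally it turns this into a bound on $\log\prod\eta_n$ via \cite[Lemma 9.10]{FeireislPrazak2010}, which amounts to integrating the counting bound against $d\Lambda/\Lambda$.

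\textbf{How yours compares.} Your truncation plus dyadic shell covering with geometrically allocated radii is a self-contained, by-hand version of these two black boxes. The $\Lambda^{-d}$ decay of the counting function becomes your decay of the shell dimensions, $D_m\lesssim(K+1)|\Omega|A^d\,2^{-md/2}$, where $m$ is the distance below the top shell. This decay beats $\log(R_m/r_m)\lesssim 1+m$, giving $\sum_m D_m(1+m)\lesssim(K+1)|\Omega|A^d$. The paper's route is shorter and treats both constraints in one object. Yours needs only covering numbers of Euclidean balls and shows exactly where a logarithm would otherwise appear.

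\textbf{The dyadic step is essential.} The crude bound $D\log(1+cA\lambda_1^{-1/2})$ loses a factor $\log(1+A|\Omega|^{1/d})$, which no choice of $c$ absorbs. In Theorem~\ref{thm:Dimension} it would cost an extra $\log L$. So drop the sentence beginning \emph{Alternatively}, which suggests the logarithm could be absorbed into the constant.

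\textbf{Two bookkeeping fixes.} First, keep exactly the modes with $\lambda_j<2A^2$ rather than cutting at an index with $\lambda_J\ge A^2$. Then $\lambda_j^\beta\le(2A^2)^\beta$ holds directly in the temporal tail, and you need not argue $\lambda_J\sim A^2$. Second, carry the factor $K+1$ in the shell entropy sum, since that is where $1+\ell A^{\beta/\alpha}B^{1/\alpha}$ enters.
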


\begin{proof}
In view of the previous notation, we have
\begin{equation*}
\sum\limits_{j = 1}^\infty \sum\limits_{k = 0}^\infty a_{j, k}^2 \lambda_j \leq  A^2 
\quad  \text{and} \quad 
\sum\limits_{j, k = 1}^\infty a_{j, k}^2 \lambda_j^{-\beta} \mu_k^{\alpha}  \leq  B^2  .
\end{equation*}
Summing the above inequalities, we obtain the ellipsoid 
\begin{equation*}
\sum\limits_{j, k = 1}^\infty \frac{a_{j, k}^2}{\eta_{j, k}^2 } \leq 1 
\end{equation*}
in the space $I^0(H^0)$ with the semiaxes $\eta_{j, k}$ defined as
\begin{equation*}
\frac{1}{\eta_{j, 0}^2}  = \frac{\lambda_j}{2A^2}  \, ,
\quad \text{and}  \quad 
\frac{1}{\eta_{j, k}^2} = \frac{\lambda_j}{2A^2}  + \frac{\mu_k^{\alpha}}{2B^2 \lambda_j^{\beta}} \, \text{for } k \geq 1  .
\end{equation*}
By \eqref{eq:Assymptotics} and \eqref{eq:Assymptotics2}, we may relabel $\eta_{j, k}$ as a non-decreasing single-indexed sequence $\eta_n$. By \cite[Lemma 2.35]{FeireislPrazak2010}, we find that $M_{\alpha, \beta}$, the number of balls with radii $\sqrt{2}$ to cover the set $\mathcal{M}_{\alpha, \beta}$, satisfies the upper bound
\begin{equation*}
M_{\alpha, \beta}
\leq 3^{N} \prod_{n = 1}^N \eta_n , 
\end{equation*}
where $N$ is the largest natural number such that $\eta_n$ is at least 1.

We fix $\Lambda \geq 1$. Our goal is to estimate the number of pairs $(j,k)$ satisfying $\eta_{j, k} \geq  \Lambda$, i.e. $1/\eta_{j, k}^2 \leq  1/\Lambda^2$. For every such pair, we must have $\lambda_j \leq 2A^2 / \Lambda^2$. Using \eqref{eq:Assymptotics}, we find $j \leq J = c |\Omega| A^d / \Lambda^d$. Suppose now that $J < 1$ even for $\Lambda = 1$, which means that there is no such $j \in \mathbb{N}$. In this case $1 < \lambda_j / 2A^2$, which implies 
\begin{equation*}
|| u ||_{I^0(H^0)}
= \sum\limits_{j = 1}^\infty \sum\limits_{k = 0}^\infty  a_{j, k}^2
< \sum\limits_{j = 1}^\infty \sum\limits_{k = 0}^\infty  a_{j, k}^2 \frac{\lambda_j}{2A^2}
\leq \frac{1}{2} \, .
\end{equation*}
Therefore, in this case, $\mathcal{M}_{\alpha, \beta}$ is contained in the unit ball in $I^0(H^0)$, and \eqref{eq:CoveringEstimate} is trivial. Thus, let us assume that $J \geq 1$. For any fixed $j \leq J$, we have $\mu_k^{\alpha} \leq 2B^2 \lambda_J^{\beta} /  \Lambda^2$ for $k \geq 1$. Using \eqref{eq:Assymptotics2} and $\Lambda \geq 1$, we find that
\begin{equation*}
k \leq K 
= 1 + c \ell B^{\frac{1}{\alpha}}  \lambda_J^{\frac{\beta}{2\alpha}} \Lambda^{-\frac{1}{\alpha}}  
\leq 1 + c \ell  A^{\frac{\beta}{\alpha}}  B^{\frac{1}{\alpha}} .
\end{equation*}
Consequently,
\begin{align*}
\# \Big\{ n \in \mathbb{N}; \eta_n \geq \Lambda \Big\}
&= \# \Big\{ (j, k) \in \mathbb{N}^2; \frac{1}{\eta_{j, k}^2} \leq \frac{1}{\Lambda^2} \Big\} 
\leq JK \\
&\leq  \big(1 + c \ell A^{\frac{\beta}{\alpha}} B^{\frac{1}{\alpha}}  \big)|\Omega| A^d  \Lambda^{-d}\, .
\end{align*}
Invoking \cite[Lemma 9.10]{FeireislPrazak2010}, we find that
\begin{equation*}
\log \prod_{n = 1}^N \eta_n \leq  \big(1 + c \ell   A^{\frac{\beta}{\alpha}} B^{\frac{1}{\alpha}} \big)|\Omega| A^d .
\end{equation*}
Combining this estimate with the above bound for $M_{\alpha, \beta}$, we obtain \eqref{eq:CoveringEstimate}.
\end{proof}

The previous lemma enables us to formulate the following covering result in the setting of the celebrated Aubin--Lions lemma.

\begin{corollary}\label{cor:Covering}
Let $p \geq 1$ and $0 \leq \alpha \leq 1$ be such that $W^{1 - \alpha, 2}_0 (0, \ell) \hookrightarrow L^{p} (0, \ell)$. Consider a Banach space $Y$ and  $\beta \geq 0$ such that $W^{1, 2}_0 (\Omega) \cap W^{\beta, 2} (\Omega)  \hookrightarrow Y$. Let $A$, $B > 0$. Then  
\begin{equation*}
\mathcal{M}
= \{ u \in L^2(0, \ell; L^2(\Omega)); || u ||_{L^2(0, \ell; W^{1,2}(\Omega))} \leq A  ,  || \partial_t u ||_{L^{p'}(0, \ell; Y^*)} \leq B \} .
\end{equation*}
can be covered by $M$ balls of radii $\sqrt{2}$ in $  L^2(0, \ell; L^2(\Omega))$ such that
\begin{equation*}
\log M \leq c \big(1 + \ell A^{\frac{\beta}{\alpha}} B^{\frac{1}{\alpha}}   \big)|\Omega| A^d    . 
\end{equation*}
\end{corollary}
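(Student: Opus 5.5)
The corollary will follow from Lemma~\ref{thm:Covering} once we show that the set $\mathcal{M}$ lies in $\mathcal{M}_{\alpha,\beta}$ with $A$ replaced by a constant multiple of $A$ and $B$ replaced by a constant multiple of $B$. Constant factors only change the constant $c$ in \eqref{eq:CoveringEstimate}. The first condition needs no work. By the spectral description of the $W^{1,2}$-norm recalled above, $\|u\|_{I^0(H^1)}=\|u\|_{L^2(0,\ell;W^{1,2}(\Omega))}\le A$. The real task is to bound the seminorm $\|u\|_{\dot I^{\alpha}(H^{-\beta})}$ by $c\|\partial_t u\|_{L^{p'}(0,\ell;Y^*)}$.

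We argue by duality. Write $u=\sum_{j,k}a_{j,k}w_j\varphi_k$. Then
\[
\|u\|_{\dot I^{\alpha}(H^{-\beta})}=\sup\Big\{\sum_{j\ge1,k\ge1}a_{j,k}b_{j,k}\Big\},
\]
where the supremum runs over $\sum_{j,k\ge 1} b_{j,k}^2\mu_k^{-\alpha}\lambda_j^{\beta}\le1$. Each such $(b_{j,k})$ determines the test function $\phi=\sum_{j,k\ge1}b_{j,k}\mu_k^{-1/2}w_j\psi_k$, built from the sine basis. Since $\partial_t\psi_k=(k\pi/\ell)\varphi_k=\mu_k^{1/2}\varphi_k$ for $k\ge1$, we get $\partial_t\phi=\sum_{j,k\ge1} b_{j,k}w_j\varphi_k$. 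Hence $\sum a_{j,k}b_{j,k}=\int_0^\ell\langle u,\partial_t\phi\rangle$. Moreover $\phi$ vanishes at $t=0,\ell$, so integration by parts gives $-\int_0^\ell\langle \partial_t u,\phi\rangle$. No boundary terms arise, and the $k=0$ mode is never seen; this is exactly why the seminorm $\dot I^\alpha$ is used. The pairing is then bounded by $\|\partial_t u\|_{L^{p'}(0,\ell;Y^*)}\|\phi\|_{L^p(0,\ell;Y)}$.

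It remains to show $\|\phi\|_{L^p(0,\ell;Y)}\le c$. The norm of $\phi$ is
\[
\|\phi\|^2_{I_0^{1-\alpha}(H^\beta)}=\sum b_{j,k}^2\mu_k^{-1}\mu_k^{1-\alpha}\lambda_j^\beta\le1.
\]
So $\phi$ lies in the unit ball of $I^{1-\alpha}_0(H^\beta)$. We identify this space with $W^{1-\alpha,2}_0(0,\ell;H^\beta)$ by interpolation between $I_0^0(H^\beta)=L^2(0,\ell;H^\beta)$ and $I^1_0(H^\beta)=W^{1,2}_0(0,\ell;H^\beta)$. Then the embedding hypothesis gives $W^{1-\alpha,2}_0(0,\ell;H^\beta)\hookrightarrow L^p(0,\ell;H^\beta)$; this is the scalar hypothesis extended to Hilbert-valued functions coordinatewise, since it is a linear estimate on a Hilbert tensor product. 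Finally, \eqref{eq:EmbeddingSpectralSpace} and the assumption on $Y$ give $H^\beta\hookrightarrow W^{1,2}_0(\Omega)\cap W^{\beta,2}(\Omega)\hookrightarrow Y$. When $\beta<1$ we first use $\lambda_j\ge\lambda_1$, so that the $H^{\max(\beta,1)}$-norm controls the relevant norm, up to a constant absorbed into $c$.

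The step I expect to be the main obstacle is the vector-valued embedding $I_0^{1-\alpha}(H^\beta)\hookrightarrow L^p(0,\ell;Y)$. Two points need care. First, the constants must not depend on $\ell$ in a way that spoils \eqref{eq:CoveringEstimate}, so the scaling of the embedding $W^{1-\alpha,2}_0(0,\ell)\hookrightarrow L^p$ in $\ell$ must be tracked. Second, the passage from the scalar statement to $H^\beta$-valued functions must be justified. For the latter I would expand in $w_j$ and use Minkowski's inequality when $p\ge2$, or otherwise argue directly with the explicit spectral construction. With these in hand, $\mathcal M\subset\mathcal M_{\alpha,\beta}$ with constants $A$ and $cB$, and Lemma~\ref{thm:Covering} gives the claimed bound.
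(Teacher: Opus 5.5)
Your route is the paper's route. You dualize against the unit ball of $\dot I^{-\alpha}(H^\beta)$, take the sine-series antiderivative $\phi$ vanishing at $t=0,\ell$, integrate by parts, and embed $I_0^{1-\alpha}(H^\beta)\hookrightarrow L^p(0,\ell;Y)$. Your Hilbert-valued extension is also fine: Minkowski in $L^{p/2}$ for $p\ge 2$ (including $p=\infty$), and H\"older on $(0,\ell)$ for $p<2$.

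\textbf{The gap: your patch for $\beta<1$.} The bound $\lambda_j\ge\lambda_1$ gives $\|\phi\|_{H^\beta}\le\lambda_1^{(\beta-1)/2}\|\phi\|_{H^1}$, which is the wrong direction. You only control $\phi$ in $I_0^{1-\alpha}(H^\beta)$, and this says nothing about $\nabla\phi$, so $\phi\in L^p(0,\ell;W^{1,2}_0(\Omega))$ does not follow.

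No repair is possible, because for $\beta<1$ the statement itself is false. In that case $W^{1,2}_0(\Omega)\cap W^{\beta,2}(\Omega)=W^{1,2}_0(\Omega)$, so $Y=W^{1,2}_0(\Omega)$, $p=2$, $\alpha=1$, $\beta=0$ is admissible. For $u=g(t)w_j(x)$ one gets $\|u\|_{\dot I^1(H^0)}=\|g'\|_{L^2(0,\ell)}$, but $\|\partial_t u\|_{L^2(0,\ell;W^{-1,2}(\Omega))}=\lambda_j^{-1/2}\|g'\|_{L^2(0,\ell)}$, and $\lambda_j^{-1/2}\to0$. The covering bound fails as well. $\mathcal M$ contains the ellipsoid with semiaxes $(\lambda_j/A^2+\mu_k/(B^2\lambda_j))^{-1/2}$, $k\ge1$. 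Counting the semiaxes that are at least $2\sqrt2$ gives $\log M\ge c\,A^{d+1}$ for $\ell=B=1$ and large $A$, whereas the claim is $c(1+\ell B)|\Omega|A^d$.

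The paper's proof has the same hole, since it rests on \eqref{eq:EmbeddingSpectralSpace}, which holds only for $\beta\ge1$ (already $H^0=L^2(\Omega)\not\subset W^{1,2}_0(\Omega)$). The correct hypothesis is $H^\beta\hookrightarrow Y$, which is exactly what Lemma~\ref{thm:LemmaNormDerivative} uses. Under that hypothesis your argument goes through verbatim.

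\textbf{The $\ell$-dependence you left open.} It closes by the exact rescaling $t=\ell\tau$. Because $\mu_k=k^2\pi^2/\ell^2$ exactly, the embedding constant of $I_0^{1-\alpha}\hookrightarrow L^p(0,\ell)$ equals $c_1\ell^{1/p+1/2-\alpha}$, where $c_1$ is the constant for $\ell=1$. The hypothesis $W_0^{1-\alpha,2}(0,\ell)\hookrightarrow L^p(0,\ell)$ forces $\alpha\le 1/2+1/p$, so the exponent is nonnegative. Hence the constant is at most $c_1$ whenever $\ell\le1$, which covers $\ell=1/L$ in the application.

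For $\ell>1$ and $\alpha<1/2+1/p$, no $\ell$-independent $c$ exists. The same semiaxis count with $p=2$, $\alpha=1/2$, $\beta=1$, $Y=W^{1,2}_0(\Omega)$ and $AB=\ell^{-1/2}$ gives $\log M\ge c\,\ell^{1/2}A^d$, against a claimed bound of order $A^d$. So the corollary tacitly needs $\ell\le 1$, a restriction the paper never states.
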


\begin{proof}
The result will follow from \eqref{eq:CoveringEstimate} with $cB$ instead of $B$ if we show 
\begin{equation*}
|| u ||_{\dot{I}^{\alpha} (H^{-\beta})} 
\leq c || \partial_t u ||_{L^{p'} (0, \ell; Y^*)} .
\end{equation*}
Let $\alpha$, $\beta \in \mathbb{R}$. By the dual representation of the norm
\begin{equation*}
|| u ||_{\dot{I}^\alpha (H^{-\beta})}
= \sup\limits_{\varphi } \int\limits_0^\ell \int\limits_{\Omega} u \varphi \, {\rm d}x \, {\rm d}t , 
\end{equation*}
where the supremum is taken over the unit ball in $\dot{I}^{-\alpha} (H^{\beta})$. Since $\partial_t \psi_k (t) = \mu_k^{1/2} \varphi_k (t)$ for $k \geq 1$, we write
\begin{equation*}
\varphi (t, x)
= \sum\limits_{j, k = 1}^\infty a_{j, k}  w_j (x) \varphi_k (t)
= \sum\limits_{j, k = 1}^\infty \frac{a_{j, k}}{\sqrt{\mu_k}} w_j (x) \partial_t \psi_k (t)
= \partial_t \chi (t, x) ,
\end{equation*}
where 
\begin{equation*}
\chi (t, x) = \sum\limits_{j, k = 1}^\infty \frac{a_{j, k}}{\sqrt{\mu_k}} w_j (x) \psi_k (t) .
\end{equation*}
Using $\psi_0 \equiv 0$ together with the orthonormality of the bases, the following equalities hold:
\begin{align*}
|| \chi ||_{I_0^{1-\alpha} (H^{\beta})}
&= \sum\limits_{j = 1}^\infty \sum\limits_{k = 0}^\infty \bigg( \int\limits_0^\ell \int\limits_{\Omega} \chi (t, x) \psi_k(t) w_j(x) \, {\rm d}x \, {\rm d}t  \bigg)^2  \mu_k^{1-\alpha} \lambda_j^\beta \\
&= \sum\limits_{j, k = 1}^\infty \frac{a_{j, k}^2}{\mu_k} \bigg( \int\limits_0^\ell \psi_k^2(t)  \, {\rm d}t \int\limits_{\Omega} w_j^2(x) \, {\rm d}x   \bigg)^2  \mu_k^{1-\alpha} \lambda_j^\beta  
= \sum\limits_{j, k = 1}^\infty a_{j, k}^2 \lambda_j^\beta \mu_k^{-\alpha} , \\
|| \varphi ||_{\dot{I}^{-\alpha} (H^{\beta})} 
&= \sum\limits_{j, k = 1}^\infty \bigg( \int\limits_0^\ell \int\limits_{\Omega} \varphi (t, x) \varphi_k(t) w_j(x) \, {\rm d}x \, {\rm d}t  \bigg)^2   \lambda_j^\beta \mu_k^{-\alpha}
= \sum\limits_{j, k = 1}^\infty  a_{j, k}^2 \lambda_j^\beta \mu_k^{-\alpha} .
\end{align*}
Therefore, 
\begin{equation*}
|| \chi ||_{I_0^{1-\alpha} (H^{\beta})} 
= || \varphi ||_{\dot{I}^{-\alpha} (H^{\beta})} 
= 1 .
\end{equation*}
Hence, integrating by parts in time, which is possible due to $\chi (0) = \chi (\ell) = 0$, we obtain
\begin{equation}
\int\limits_0^\ell \int\limits_{\Omega} u \varphi \, {\rm d}x \, {\rm d}t 
= \int\limits_0^\ell \int\limits_{\Omega} u \partial_t \chi   \, {\rm d}x \, {\rm d}t 
= - \int\limits_0^\ell \langle  \partial_t u , \chi \rangle \, {\rm d}t . \label{eq:Duality}
\end{equation}

Consider now $\alpha$, $\beta \geq 0$ such that $W^{1 - \alpha, 2}_0 (0, \ell) \hookrightarrow L^p (0, \ell)$ and $W^{1, 2}_0 (\Omega) \cap W^{\beta, 2} (\Omega)  \hookrightarrow Y$. In view of \eqref{eq:EmbeddingSpectralSpace}, we have $I^{1 - \alpha}_0 \hookrightarrow W^{1 - \alpha, 2}_0 (0, \ell)$ and $H^\beta \hookrightarrow W^{1, 2}_0 (\Omega) \cap W^{\beta, 2} (\Omega)$. Therefore, $I_0^{1-\alpha} (H^{\beta}) \hookrightarrow L^p (0, \ell; Y)$, and hence $\chi \in L^p (0, \ell; Y)$. Combining this with \eqref{eq:Duality} yields the desired inequality.
\end{proof}

The following result is more subtle, as it considers the time derivative in the dual of the intersection of several spaces arising in Lemma~\ref{thm:SmoothingProperty}.

\begin{lemma}\label{thm:LemmaNormDerivative}
We have
\begin{equation}
|| u ||_X
\leq \frac{c}{\varepsilon} || \partial_t u ||_{Y^*} , \label{eq:NormDerivative}
\end{equation}
where
\begin{align*}
Y
&=  L^\infty(0, \ell; L^2(\Omega))  \cap L^2(0, \ell; W^{1,2}(\Omega)) \cap L^\infty(0, \ell; L^\infty(\Omega)), \\
X
&=  \dot{I}^\alpha (H^0) + \dot{I}^1 (H^{-1}) + \dot{I}^\alpha (H^{-\beta}) ,
\end{align*}
for $\alpha = 1/2 - \varepsilon$ and $\beta = d/2 + \varepsilon$, where $0 < \varepsilon < 1/2$ is arbitrary.

Moreover, for $A$, $B > 1$, the set
\begin{equation*}
\mathcal{M}
= \{ u \in L^2(0, \ell; L^2(\Omega)); || u ||_{L^2(0, \ell; W^{1,2}(\Omega))}  \leq A  ,  ||  \partial_t u ||_{Y^*} \leq B \} 
\end{equation*}
can be covered by $M$ balls of radii $\sqrt{2}$ in $ I^0(H^0)$ such that
\begin{equation}
\log M \leq C \Big[1 + \frac{\ell}{\varepsilon^{1/\alpha}} \big( B^{\frac{1}{\alpha}} + AB + A^{\frac{\beta}{\alpha}} B^{\frac{1}{\alpha}} \big)  \Big]|\Omega| A^d . \label{eq:CoveringEstimateUnion}
\end{equation}
\end{lemma}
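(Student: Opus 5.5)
The plan is to prove \eqref{eq:NormDerivative} by duality, exactly as in the proof of Corollary~\ref{cor:Covering}, and then to obtain \eqref{eq:CoveringEstimateUnion} by splitting $u$ into three pieces and applying Lemma~\ref{thm:Covering} to each.

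\emph{Duality step.} The norm of $X$ is the sum-norm, so its dual is the intersection
\[
\dot I^{-\alpha}(H^0)\cap \dot I^{-1}(H^{1})\cap \dot I^{-\alpha}(H^{\beta}),
\]
with the maximum of the three norms. It therefore suffices to bound $\int_0^\ell\!\int_\Omega u\varphi$ for $\varphi$ in the unit ball of this intersection. As in Corollary~\ref{cor:Covering}, we write $\varphi=\partial_t\chi$ with $\chi=\sum a_{j,k}\mu_k^{-1/2}w_j\psi_k$. Then $\chi(0)=\chi(\ell)=0$, and the same computation as there gives
\[
\|\chi\|_{I_0^{1-\alpha}(H^0)}\le 1,\qquad \|\chi\|_{I_0^{0}(H^1)}\le 1,\qquad \|\chi\|_{I_0^{1-\alpha}(H^\beta)}\le 1 .
\]
Integration by parts gives $\int u\varphi=-\int_0^\ell\langle\partial_t u,\chi\rangle$. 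It remains to check that $\chi\in Y$ with $\|\chi\|_Y\le c/\varepsilon$, one component of $Y$ at a time:
\begin{itemize}
\item $L^2(0,\ell;W^{1,2})$: this is immediate from $I_0^0(H^1)=L^2(0,\ell;W^{1,2}_0)$.
\item $L^\infty(0,\ell;L^2)$: here $1-\alpha=1/2+\varepsilon$, and the one-dimensional version of \eqref{eq:EmbeddingBehaviour} gives $\|\chi\|_{L^\infty(L^2)}\le c\varepsilon^{-1/2}\|\chi\|_{I^{1/2+\varepsilon}_0(H^0)}$. This holds coefficientwise in $j$, and a Hilbert-valued version follows since the sup bound comes from Cauchy--Schwarz in $k$.
\item $L^\infty(0,\ell;L^\infty)$: apply \eqref{eq:EmbeddingBehaviour} in space with $\beta=d/2+\varepsilon$ and in time with $1/2+\varepsilon$. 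The tensor Cauchy--Schwarz argument, $|\chi(t,x)|\le (\sum_{j,k}\lambda_j^{-\beta}\mu_k^{-(1-\alpha)}|w_j(x)|^2|\psi_k(t)|^2)^{1/2}\|\chi\|$, factorises the first sum into a product of two series of size $c/\varepsilon$ each. This yields the constant $c/\varepsilon$.
\end{itemize}
Hence $\|u\|_X\le (c/\varepsilon)\|\partial_t u\|_{Y^*}$.

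\emph{Covering step.} If $\|\partial_t u\|_{Y^*}\le B$, then $u=u_1+u_2+u_3$ (plus the zero time mode, which is controlled through $A$ only), with
\[
\|u_1\|_{\dot I^\alpha(H^0)}\le cB/\varepsilon,\qquad \|u_2\|_{\dot I^1(H^{-1})}\le cB/\varepsilon,\qquad \|u_3\|_{\dot I^\alpha(H^{-\beta})}\le cB/\varepsilon .
\]
The difficulty is that the pieces $u_i$ need not individually satisfy $\|u_i\|_{I^0(H^1)}\le A$. I would circumvent this by not splitting the set, but the ellipsoid: in coefficient space, write $a_{j,k}=a^1_{j,k}+a^2_{j,k}+a^3_{j,k}$, so that $|a_{j,k}|^2\le 3\sum_i |a^i_{j,k}|^2$. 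Then, on the modes where a given weight is the smallest, use the corresponding piece. Concretely, cover $\mathcal M$ by the union over $i$ of the sets where the modes $(j,k)$ with $\eta^{(i)}_{j,k}$ largest dominate. More simply, estimate the counting function
\[
\#\{(j,k): \eta_{j,k}\ge\Lambda\},\qquad \eta_{j,k}^{-2}=\frac{\lambda_j}{2A^2}+\min_i\frac{\mu_k^{\alpha_i}\lambda_j^{-\beta_i}\varepsilon^2}{cB^2},
\]
by the sum of the three individual counts appearing in the proof of Lemma~\ref{thm:Covering}. Using this min-weight ellipsoid requires showing that $\mathcal M$ lies in a fixed multiple of it; I would try to argue this through the dual characterization, namely the infimal-convolution of quadratic forms, whose dual is the sum of the dual forms. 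This is the step I expect to be the main obstacle.

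Granting this, the count is bounded by $JK_1+JK_2+JK_3$, with $K_i\le 1+c\ell(B/\varepsilon)^{1/\alpha_i}A^{\beta_i/\alpha_i}$. For the three pieces:
\begin{itemize}
\item $u_1$: $\alpha_1=\alpha$, $\beta_1=0$, giving $\ell\varepsilon^{-1/\alpha}B^{1/\alpha}$;
\item $u_2$: $\alpha_2=1$, $\beta_2=1$, giving $\ell AB/\varepsilon\le \ell\varepsilon^{-1/\alpha}AB$;
\item $u_3$: giving $\ell\varepsilon^{-1/\alpha}A^{\beta/\alpha}B^{1/\alpha}$.
\end{itemize}
Applying \cite[Lemma 9.10]{FeireislPrazak2010} and \cite[Lemma 2.35]{FeireislPrazak2010} as in Lemma~\ref{thm:Covering} then yields \eqref{eq:CoveringEstimateUnion}.
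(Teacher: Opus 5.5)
Your proposal is correct. It takes a somewhat different route from the paper in both halves, and the step you flag as the main obstacle closes in one line.

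\textbf{The estimate \eqref{eq:NormDerivative}.} The paper writes $u=u_1+u_2+u_3$ with $\partial_t u_i$ in $L^1(0,\ell;L^2(\Omega))$, $L^2(0,\ell;W^{-1,2}(\Omega))$ and $L^1(0,\ell;L^1(\Omega))$ respectively. It then proves three separate duality estimates, with constants $c\varepsilon^{-1/2}$, $c$ and $c\varepsilon^{-1}$. You instead dualize $X$ once against the intersection $\dot I^{-\alpha}(H^0)\cap\dot I^{-1}(H^1)\cap\dot I^{-\alpha}(H^\beta)$, and check that the single primitive $\chi$ lies in all three components of $Y$. Your version needs no splitting of $u$ at all; the paper leaves implicit how the $u_i$ arise from a splitting of $\partial_t u$. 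Your factorized Cauchy--Schwarz bound also shows exactly where the $\varepsilon^{-1}$ comes from: one factor $\varepsilon^{-1/2}$ from space and one from time.

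\textbf{The covering estimate.} The paper selects a piece mode by mode, asserts $\widetilde{\mathcal M}\subset\mathcal M_{\alpha,0}\cup\mathcal M_{1,1}\cup\mathcal M_{\alpha,\beta}$, and adds the three covering numbers. Your min-weight ellipsoid keeps that selection at the level of semiaxes. This is the more careful formulation, since a given $u$ may be controlled by different pieces on different modes, and then need not belong to any single $\mathcal M_{\alpha_i,\beta_i}$.

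\textbf{Closing the flagged step.} Set $w_1=\mu_k^\alpha$, $w_2=\mu_k\lambda_j^{-1}$, $w_3=\mu_k^\alpha\lambda_j^{-\beta}$. Take a decomposition $u=u_1+u_2+u_3$ whose three norms sum to at most $cB/\varepsilon+\delta$. Multiply your inequality $|a_{j,k}|^2\le 3\sum_i|a^i_{j,k}|^2$ by $\min_i w_i(j,k)\le w_i(j,k)$ and sum over modes:
\[
\sum_{j,k\ge 1} a_{j,k}^2\,\min_i w_i(j,k)\le 3\sum_{i=1}^3\sum_{j,k\ge1}(a^i_{j,k})^2\,w_i(j,k)\le 3\Big(\frac{cB}{\varepsilon}+\delta\Big)^2 .
\]
Together with $\sum_{j,k}a_{j,k}^2\lambda_j\le A^2$, this is exactly the containment in the min-weight ellipsoid. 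No infimal-convolution duality is needed, though that route would also work, since it gives the weight $(\sum_i w_i^{-1})^{-1}\ge\frac13\min_i w_i$.

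The set $\{(j,k):\eta_{j,k}\ge\Lambda\}$ is then the union over $i$ of the three sets counted in the proof of Lemma~\ref{thm:Covering}. So your bound $J(K_1+K_2+K_3)$ on the counting function is justified, and \eqref{eq:CoveringEstimateUnion} follows as you describe.
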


\begin{proof}
By the definition of the norm in $X$,  
\begin{equation*}
|| u ||_X
= \inf \big\{ || u_1 ||_{\dot{I}^\alpha (H^0)} + || u_2 ||_{\dot{I}^1 (H^{-1})} + || u_3 ||_{\dot{I}^\alpha (H^{-\beta})} ; u = u_1 + u_2 + u_3 \big\} ,
\end{equation*}
we consider a decomposition $u = u_1 + u_2 + u_3$ for some $u_1 \in \dot{I}^\alpha (H^0)$, $u_2 \in \dot{I}^1 (H^{-1})$, and $u_3 \in \dot{I}^\alpha (H^{-\beta})$. As the dual of the intersection corresponds to the sum of duals and $\partial_t u = \partial_t u_1 + \partial_t u_2 + \partial_t u_3$, it therefore suffices to establish the following inequalities:
\begin{align*}
|| u_1 ||_{\dot{I}^\alpha (H^0)} 
&\leq \frac{c}{\sqrt{\varepsilon}} || \partial_t u_1 ||_{L^1 (0, \ell; L^{2}(\Omega))} , \\
|| u_2 ||_{\dot{I}^1 (H^{-1})} 
&\leq c || \partial_t u_2 ||_{L^2 (0, \ell; W^{-1,2}(\Omega))}, \\
|| u_3 ||_{\dot{I}^\alpha (H^{-\beta})} 
&\leq \frac{c}{\varepsilon} || \partial_t u_3 ||_{L^1 (0, \ell; L^{1}(\Omega))}.
\end{align*}
We argue as in the proof of Corollary~\ref{cor:Covering} to find \eqref{eq:Duality} for each $u_k$. Now, for $\alpha = 1$ and $\beta = -1$, we obtain the second inequality using $\chi \in I_0^0 (H^1) \hookrightarrow L^2 (0, \ell; W_0^{1,2} (\Omega))$, where the embedding follows from $H^1 = W_0^{1,2} (\Omega)$. Next, let $\alpha = 1/2 - \varepsilon$, $\varepsilon > 0$, and $\beta = 0$. Since $\chi \in I_0^{1 - \alpha} (H^{0}) \hookrightarrow L^\infty (0, \ell; L^2 (\Omega))$, the first inequality follows from \eqref{eq:EmbeddingBehaviour}. Likewise, the third inequality follows from the embedding  $H^{\beta}(\Omega) \hookrightarrow L^{\infty}(\Omega)$, valid for $\beta > d / 2$, together with \eqref{eq:EmbeddingBehaviour}.

It remains to show the covering estimate \eqref{eq:CoveringEstimateUnion}. In view of \eqref{eq:NormDerivative}, it suffices to cover the set
\begin{equation*}
\widetilde{\mathcal{M}}
= \Big\{ u \in I^0(H^0); || u ||_{I^0(H^1)}  \leq A  ,  || u ||_{X} \leq \frac{c}{\varepsilon} B \Big\}  .
\end{equation*}
First of all, any $u \in \widetilde{\mathcal{M}}$ satisfies
\begin{equation*}
\sum\limits_{j = 1}^\infty \sum\limits_{k = 0}^\infty a_{j, k}^2 \lambda_j \leq A^2 , 
\end{equation*}
where $a_{j, k}$ are the projections of $u$ in $I^0(H^0)$; more precisely $a_{j, k}
= (u, w_j \varphi_k)_{L^2((0, \ell) \times \Omega)}$. By the definition of $X$, we consider an arbitrary decomposition $u = u_1 + u_2 + u_3$ such that
\begin{equation*}
\sum\limits_{j, k = 1}^\infty \big( b_{j, k}^2 \mu_k^\alpha + c_{j, k}^2 \mu_k \lambda_j^{-1} + d_{j, k}^2 \mu_k^\alpha \lambda_j^{-\beta}  \big) \leq \frac{c^2}{\varepsilon^2} B^2 , 
\end{equation*}
where $b_{j, k}$, $c_{j, k}$, and $d_{j, k}$ are the respective projections of $u_1$, $u_2$, and $u_3$ in $I^0(H^0)$. Hence, 
\begin{equation*}
\sum\limits_{j, k = 1}^\infty \Big[ \frac{1}{2A^2}a_{j, k}^2 \lambda_j + \frac{\varepsilon^2}{2c^2 B^2} \big( b_{j, k}^2 \mu_k^\alpha + c_{j, k}^2 \mu_k \lambda_j^{-1} + d_{j, k}^2 \mu_k^\alpha \lambda_j^{-\beta}  \big) \Big] \leq 1 , 
\end{equation*}
Since $u = u_1 + u_2 + u_3$, we see that
\begin{equation*}
a_{j, k}^2 = (b_{j, k} + c_{j, k} + d_{j, k})^2
\end{equation*}
holds for any $j \in \mathbb{N}$ and $k \in \mathbb{N}_0$. Fix the indices $j \in \mathbb{N}$ and $k \in \mathbb{N}_0$, and suppose that $a_{j, k} \neq 0$. Then at least one of $b_{j, k}$, $c_{j, k}$, and $d_{j, k}$ is also non-zero. Assume first that $b_{j,k} \neq 0$, then we relax the above inequality by keeping only the following term corresponding to the index pair $(j, k)$:
\begin{equation*}
\min \{ a_{j, k}^2, b_{j, k}^2 \} \Big( \frac{\lambda_j}{A^2} + \frac{\varepsilon^2 \mu_k^\alpha }{2c^2 B^2} \Big) .
\end{equation*}
The remaining cases are treated analogously. Hence, we find that
\begin{equation*}
\widetilde{\mathcal{M}}
\subset \mathcal{M}_{\alpha, 0} 
\cup \mathcal{M}_{1, 1} 
\cup \mathcal{M}_{\alpha, \beta} , 
\end{equation*}
where the sets on the right-hand side are as defined in Lemma~\ref{thm:Covering}. Denoting by $M$ the number of balls needed to cover $\mathcal{M}$, we see that 
\begin{equation*}
M \leq M_{\alpha, 0} + M_{1, 1} + M_{\alpha, \beta} .
\end{equation*}
Applying \eqref{eq:CoveringEstimate} to each of the three sets yields \eqref{eq:CoveringEstimateUnion}.
\end{proof}

\bibliographystyle{amsplain}
\bibliography{References}

\end{document}